\newtheorem{theorem}{Theorem}[section] 
\newtheorem{lemma}[theorem]{Lemma}     
\newtheorem{definition}[theorem]{Definition}
\newcommand{\display}{\begin{displaymath}}
\newcommand{\offplay}{\end{displaymath}}
\newcommand{\eqn}{\begin{eqnarray}}
\newcommand{\eqns}{\begin{eqnarray*}}
\newcommand{\uneq}{\end{eqnarray}}
\newcommand{\uneqs}{\end{eqnarray*}}
\newcommand{\nn}{\mathbb{N}}
\newcommand{\qq}{\mathbb{Q}}
\newcommand{\mcal}{\mathcal}
\newcommand{\m}{\mathcal}
\newcommand{\back}{\backslash}
\newcommand{\nc}{{$2822$ }}
\newcommand{\ncmath}{2822}
\newcommand{\cR}{\texttt{r}}
\newcommand{\cG}{\texttt{g}}
\newcommand{\cB}{\texttt{b}}
\newcommand{\cY}{\texttt{y}}
\newcommand{\rA}[1]{\texttt{P}_{{#1}}}
\newcommand{\rB}[1]{\texttt{Q}_{#1}}
\newcommand{\rC}{\texttt{C}}
\newcommand{\gA}{A}
\newcommand{\gB}{B}
\newcommand{\gH}{A}
\newcommand{\rX}{\texttt{x}}
\newcommand{\rY}{\texttt{y}}
\newcommand{\rZ}{\texttt{z}}
\newcommand{\rW}{\texttt{w}}
\newcommand{\gm}{\gamma}
\newcommand{\url}{{\tt http://www.math.ubc.ca/$\sim$jpsteinb}}
\newcommand{\arxurl}{{\tt www.arxiv.org}}
\newif\ifarxiv
\newcommand{\height}{0}
\newcommand{\usualfigureheader}{
\begin{figure}
\begin{center}
\begin{pspicture}(0,0)(0,\height)
}
\newcommand{\exclamatoryusualfigureheader}{
\begin{figure}[!!h]
\begin{center}
\begin{pspicture}(0, 0)(0, \height)
}
\newcommand{\presentationfig}{
\renewcommand{\height}{17.5}
\usualfigureheader
\rput(-0.3,0){
\rput(-6.6,-10.3){
\includegraphics{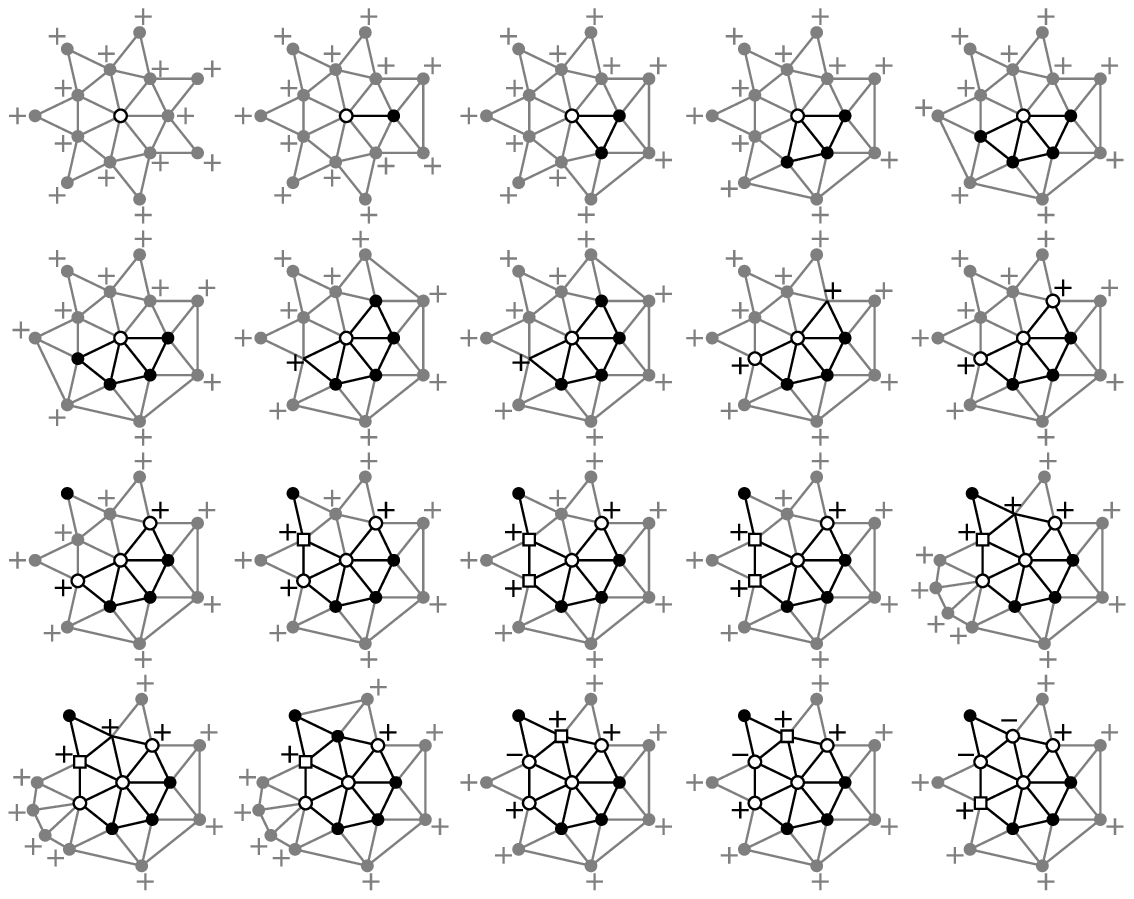}
\rput(1.2,27){
\rput[r](0,0){{\footnotesize $\mathit{0}$}}
\rput[r](0,-2.3){{\footnotesize $\mathit{5}$}}
\rput[r](0,-4.5){{\footnotesize $\mathit{10}$}}
\rput[r](0,-6.8){{\footnotesize $\mathit{15}$}}
}
}
\rput(-5.2, 7.5){\ptext}
}
\end{pspicture}
\renewcommand{\captionlabeldelim}{:}\caption{Illustration of a sample presentation file. Vertices $v$ with $(\gamma^-(v), \gamma^+(v)) = (5, \infty)$ are lightened for clarity.}\renewcommand{\captionlabeldelim}{}
\end{center}
\label{presentation}
\end{figure}
}
\newcommand{\fourintersectionfig}{
\renewcommand{\height}{3.6}
\usualfigureheader
\rput(-2,0.2){\includegraphics{four_intersection.eps}}
\end{pspicture}
\renewcommand{\captionlabeldelim}{:}\caption{How to make a map cubic.}\renewcommand{\captionlabeldelim}{}
\end{center}
\end{figure}
}
\newcommand{\mapandgraphfig}{
\renewcommand{\height}{3.6}
\usualfigureheader
\rput(0.5,0){\includegraphics{map_and_dual.eps}}
\end{pspicture}
\renewcommand{\captionlabeldelim}{:}\caption{Forming the dual graph of a map.}\renewcommand{\captionlabeldelim}{}
\end{center}
\end{figure}
}
\newcommand{\birkhoffdiamondfig}{
\renewcommand{\height}{2.7}
\usualfigureheader
\rput(0.55,-0.4){\includegraphics{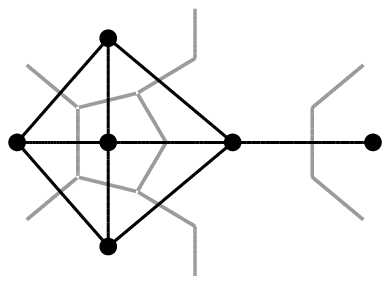}}
\end{pspicture}
\renewcommand{\captionlabeldelim}{:}\caption{The Birkhoff diamond $\gA$ (left) and its reducer $\gA'$ (right), shown with their dual maps.}\renewcommand{\captionlabeldelim}{}
\label{birkhoffdiamond}
\end{center}
\end{figure}
}
\newcommand{\plus}{
\psline(-0.078, 0)(0.078, 0)
\psline(0, -0.078)(0, 0.078)
}
\newcommand{\degreetwelvefig}{
\renewcommand{\height}{13.4}
\usualfigureheader
\psset{linewidth=0.6pt}
\rput(2,-1){
\degtwelveproof
}
\end{pspicture}
\renewcommand{\captionlabeldelim}{:}\caption{The case analysis showing that a vertex of degree 7 cannot transfer more than $1/2$ units of charge to a vertex of degree $\geq 8$.}\renewcommand{\captionlabeldelim}{}
\label{degreetwelve}
\end{center}
\end{figure}
}
\newcommand{\vertexshapesfig}{
\renewcommand{\height}{1.7}
\usualfigureheader
\psset{linewidth=0.6pt}
\rput(-0.1,1.5){
\rput(-5,0){
\rput(0,0)   {\psline(0.19,-0.34)(0,0)(-0.19,-0.34)\psdot[dotsize=0.19]\rput(1.4,-0.08){degree 5}
\rput(0,-0.6){\psline(0.19,-0.34)(0,0)(-0.19,-0.34)\rput(1.4,-0.08){degree 6}
\rput(0,-0.6){\psline(0.19,-0.34)(0,0)(-0.19,-0.34)\pscircle[fillstyle=solid,fillcolor=white]{0.10}\rput(1.4,-0.08){degree 7}
}}}}
\rput(-1,0){
\rput(0,0)   {\psline(0.19,-0.34)(0,0)(-0.19,-0.34)\pspolygon[fillstyle=solid,fillcolor=white](-0.078,-0.078)(-0.078,0.078)(0.078,0.078)(0.078,-0.078)\rput(1.4,-0.08){degree 8}
\rput(0,-0.6){\psline(0.19,-0.34)(0,0)(-0.19,-0.34)\pspolygon[fillstyle=solid,fillcolor=white](-0.1,-0.07)(0,0.1)(0.1,-0.07)\rput(1.4,-0.08){degree 9}
\rput(0,-0.6){\psline(0.19,-0.34)(0,0)(-0.19,-0.34)\rput(0,-0.02){\pspolygon[fillstyle=solid,fillcolor=white](-0.061,-0.04)(-0.1,0.062)(0,0.14)(0.1,0.062)(0.061,-0.04)}\rput(1.5,-0.08){degree 10}
}}}}
\rput(2.8,0){
\rput(0,0)   {
\rput(0,-0.6){\psline(0.19,-0.34)(0,0)(-0.19,-0.34)\pspolygon[fillstyle=solid,fillcolor=white](-0.09,-0.09)(-0.09,0.09)(-0.03,0.09)(-0.03,-0.09)\rput(1.4,-0.08){degree 11}
                                                   \pspolygon[fillstyle=solid,fillcolor=white]( 0.09,-0.09)( 0.09,0.09)( 0.03,0.09)( 0.03,-0.09)
\rput(0,-0.6){
}}}}
}
\end{pspicture}
\renewcommand{\captionlabeldelim}{:}\caption{Shapes for showing the degree of a vertex.}\renewcommand{\captionlabeldelim}{}
\label{vertexshapes}
\end{center}
\end{figure}
}
\newcommand{\sqr}{0.866}
\newcommand{\sqrpp}{1.866}
\newcommand{\fivefivepairfig}{
\renewcommand{\height}{1.5}
\usualfigureheader
\psset{linewidth=0.8pt,xunit=0.69cm,yunit=0.69cm}
\rput(-6.2,1.3){
\pspolygon(-\sqr,0.5)(\sqr,-0.5)(\sqr,0.5)(-\sqr,-0.5)
\psdot[dotsize=0.19](-\sqr,-0.5)
\psdot[dotsize=0.19](-\sqr, 0.5)
\rput(1.732,0){
\psline(-\sqr,-0.5)(0,-1)(0,0)(-\sqr,0.5)
\psline(0,0)(-\sqr,-0.5)
\rput(-\sqr,-0.5){\pscircle[fillstyle=solid,fillcolor=white]{0.113}}
\psdot[dotsize=0.19](0,-1)
}
\rput(5.7,-.2){
\pspolygon(-1,0)(-0.5,-\sqr)(0.5,\sqr)(1.5,-\sqr)(2, 0)
\psdot[dotsize=0.19](0.5,\sqr)
\psline(-1,0)(-1.5,-\sqr)(-0.5,-\sqr)
\psline(2,0)(2.5,-\sqr)(1.5,-\sqr)
\psdot[dotsize=0.19](-1,0)
\psdot[dotsize=0.19](-1.5,-\sqr)
\psdot[dotsize=0.19](-0.5,-\sqr)
\psdot[dotsize=0.19](2,0)
\psdot[dotsize=0.19](2.5,-\sqr)
\psdot[dotsize=0.19](1.5,-\sqr)
\rput(5.7,0){
\psline(0,0)(-\sqr,0.5)(-\sqr,-0.5)(0,0)(1,0)(\sqrpp,0.5)(\sqrpp,-0.5)(1,0)
\psdot[dotsize=0.19](0,0)
\psdot[dotsize=0.19](1,0)
\psdot[dotsize=0.19](-\sqr,0.5)
\psdot[dotsize=0.19](-\sqr,-0.5)
\psdot[dotsize=0.19](\sqrpp,0.5)
\psdot[dotsize=0.19](\sqrpp,-0.5)
}
}
}
\end{pspicture}
\renewcommand{\captionlabeldelim}{:}\caption{Configurations with blocks of size 2 and 3. The first and third configurations have hanging 5-5 pairs.}\renewcommand{\captionlabeldelim}{}
\label{fivefivepair}
\end{center}
\end{figure}
}
\newcommand{\curvesfig}{
\renewcommand{\height}{1.8}
\usualfigureheader
\psset{xunit=0.7cm, yunit=0.7cm}
\rput(-4.3,0){\includegraphics{test.eps}}
\end{pspicture}
\renewcommand{\captionlabeldelim}{:}\caption{A potentially infinite sequence of $D$-reducible configurations, drawn in map form; terms of the sequence have been checked $D$-reducible up to ring-size 18 (the first configuration has ring-size 9).}\renewcommand{\captionlabeldelim}{}
\end{center}
\end{figure}
}
\newcommand{\mayerfig}{
\renewcommand{\height}{5.1}
\usualfigureheader
\rput(-3.4,0){\includegraphics{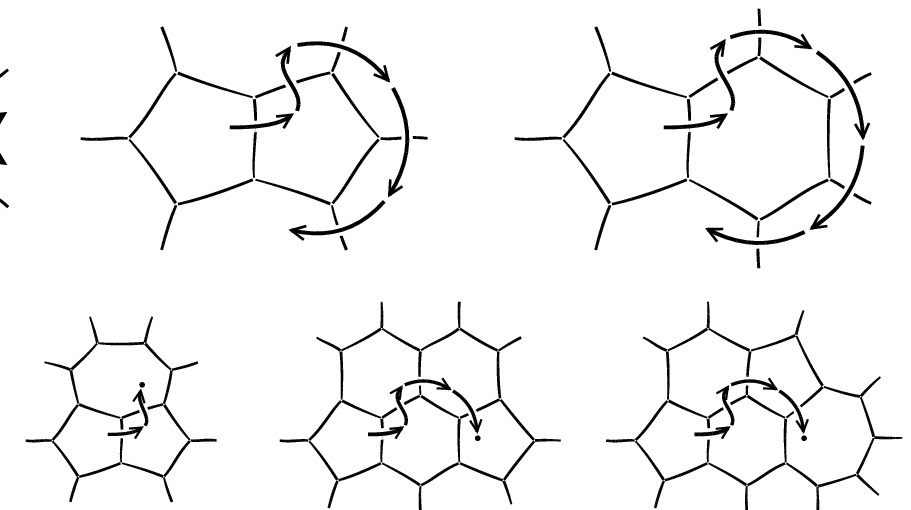}
  \rput(-.15,3.78){
  \rput(-0.7,0){{\large{\textbf{1}}}}
  \rput(0.43,0.32){$\frac{1}{10}$}
  \rput(0.43,-0.3){$\frac{1}{10}$}
  \rput(-0.05,-1.02){$\frac{1}{10}$}
  \rput(-0.75,-1.28){$\frac{1}{10}$}
  \rput(-1.58,-0.95){$\frac{1}{10}$}
  \rput(-1.95,-0.42){$\frac{1}{10}$}
  \rput(-1.95, 0.42){$\frac{1}{10}$}
  \rput(-1.58, 1.02){$\frac{1}{10}$}
  \rput(-0.75, 1.32){$\frac{1}{10}$}
  \rput(-0.05, 1.1){$\frac{1}{10}$}
  }
}
\end{pspicture}
\renewcommand{\captionlabeldelim}{:}\caption{A sketch of Mayer's discharging procedure, using map form. Top: a pentagon divides its charge of 1 into 10 packets and sends two packets to each neighbor. A packet settles if the neighbor is a major vertex, otherwise travels clockwise or counterclockwise around the neighbor until it reaches a major vertex or until a reducible configuration appears. Bottom: sample packet trajectories. In the third drawing, the charge stops because of the appearance of a reducible configuration (the fifth configuration of Fig$.$ \ref{minorconfs}). In the fourth drawing, the charge would stop before the heptagon if Bernhart's diamond (Fig$.$ \ref{bernhart}) were an allowed configuration. If no major vertex appears one of the configurations of Fig$.$ \ref{minorconfs} necessarily appears.}\renewcommand{\captionlabeldelim}{}
\label{mayerrules}
\end{center}
\end{figure}
}
\newcommand{\minorconfsfig}{
\renewcommand{\height}{2.4}
\usualfigureheader
\rput(-7.35,0.5){\includegraphics{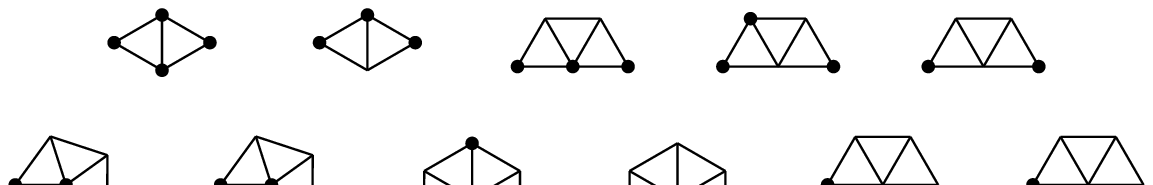}}
\end{pspicture}
\renewcommand{\captionlabeldelim}{:}\caption{The $D$-reducible configurations used for Mayer's discharging procedure.}\renewcommand{\captionlabeldelim}{}
\label{minorconfs}
\end{center}
\end{figure}
}
\newcommand{\proofconfsfig}{
\renewcommand{\height}{3.4}
\usualfigureheader
\rput(-6.8,1.25){\includegraphics{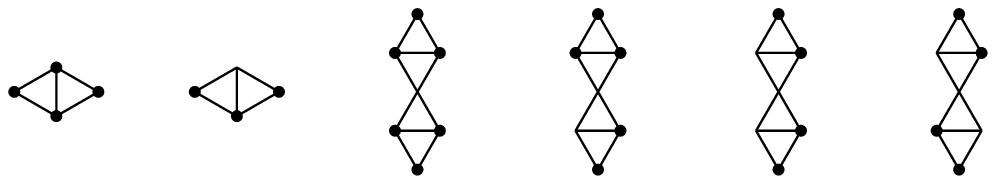}}
\end{pspicture}
\renewcommand{\captionlabeldelim}{:}\caption{The $D$-reducible configurations used for the proof of Lemma \ref{lastly}.}\renewcommand{\captionlabeldelim}{}
\label{proofconfs}
\end{center}
\end{figure}
}
\newcommand{\bernhartfig}{
\renewcommand{\height}{1.0}
\usualfigureheader
\psset{xunit=0.9cm,yunit=0.9cm}
\rput(0,0.4){
\pspolygon[linewidth=0.7pt](0,0.5)(0.866,0)(0,-0.5)(-0.866,0)
\psline(0,0.5)(0,-0.5)
\psdot[dotsize=0.2](0.866,0)
\psdot[dotsize=0.2](-0.866,0)
}
\end{pspicture}
\renewcommand{\captionlabeldelim}{:}\caption{The Bernhart diamond, a $C$-reducible configuration.}\renewcommand{\captionlabeldelim}{}
\label{bernhart}
\end{center}
\end{figure}
}
\newcommand{\rsstrulesfig}{
\renewcommand{\height}{13.6}
\usualfigureheader
\psset{xunit=0.7cm, yunit=0.7cm}
\rput(-9.7,-16.7){
\rput(0.5,-1.5){\includegraphics{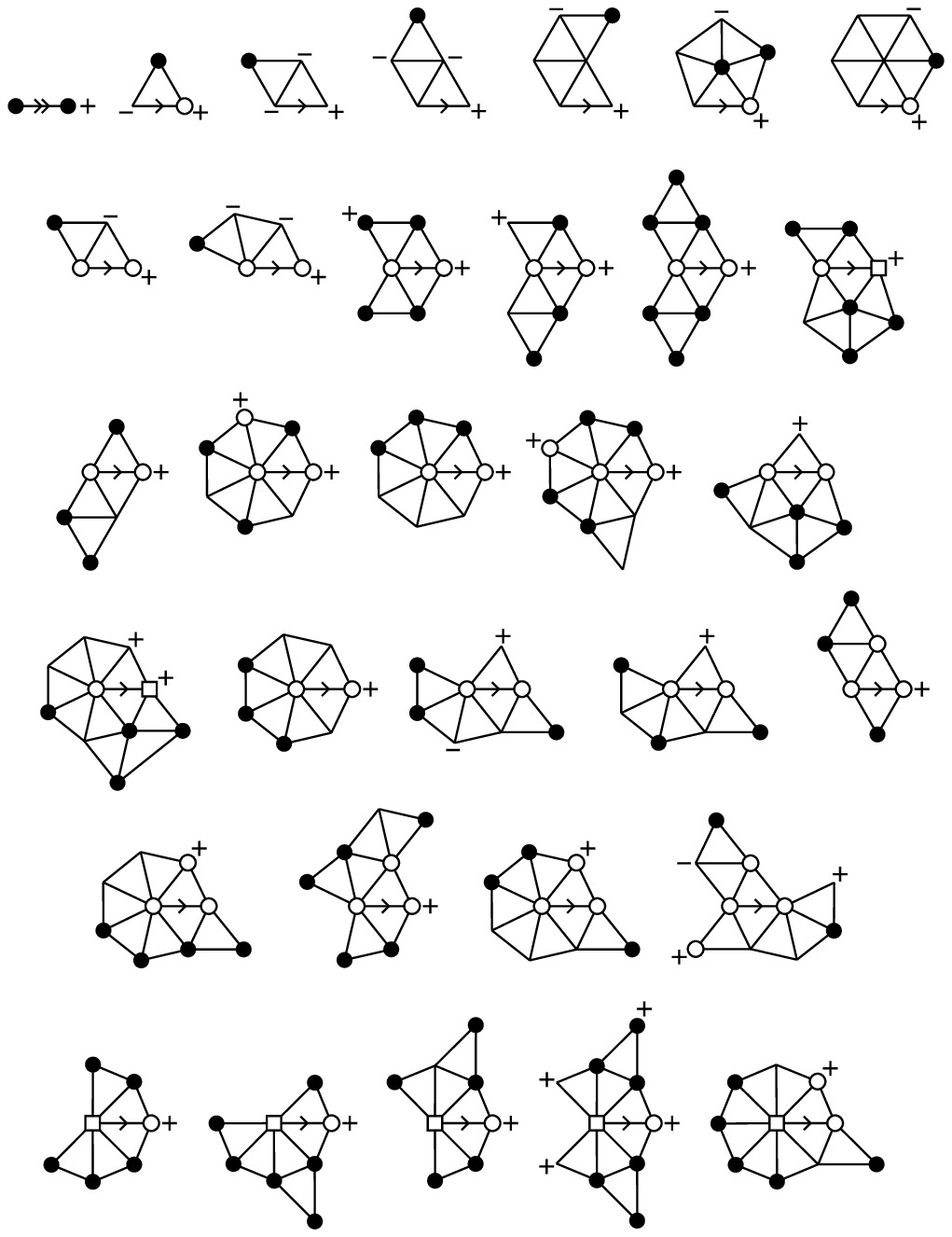}}}
\end{pspicture}
\renewcommand{\captionlabeldelim}{:}\caption{The discharging rules of Roberston et al.}\renewcommand{\captionlabeldelim}{}
\label{rsstrules}
\end{center}
\end{figure}
}
\newcommand{\mainrulesfig}{
\renewcommand{\height}{16.1}
\usualfigureheader
\psset{xunit=0.7cm, yunit=0.7cm}
\rput(-10.4,-13){
\rput(0.7,-1.5){\includegraphics{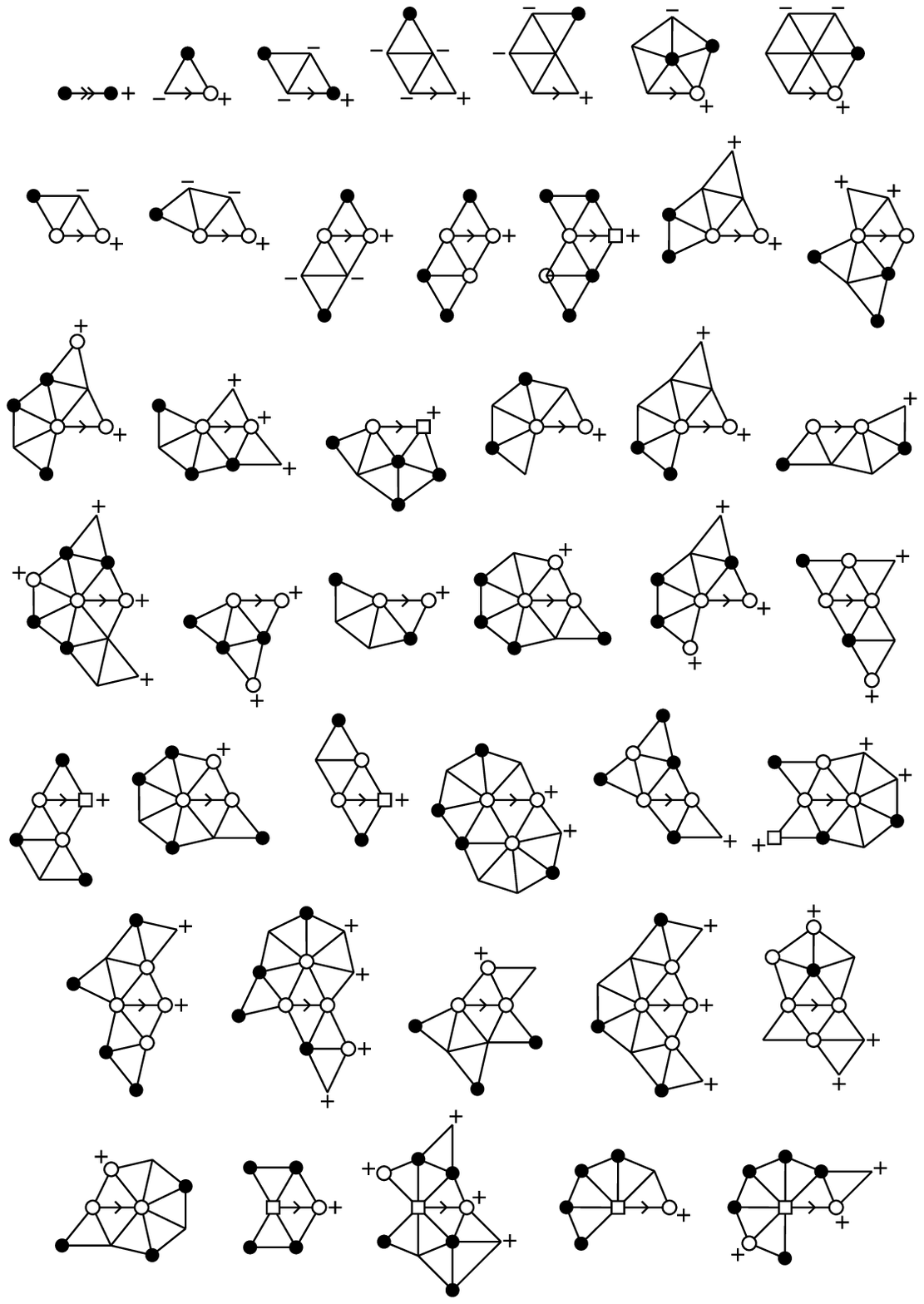}}}
\end{pspicture}
\renewcommand{\captionlabeldelim}{:}\caption{Our set of discharging rules, $\mcal{L}_{42}$.}\renewcommand{\captionlabeldelim}{}
\label{mainrules}
\end{center}
\end{figure}
}
\newcommand{\wellpositionedfig}{
\renewcommand{\height}{3}
\usualfigureheader
\psset{xunit=0.7cm, yunit=0.7cm}
\rput(-3.8, 1){
\rput(0.5,-1.5){\includegraphics{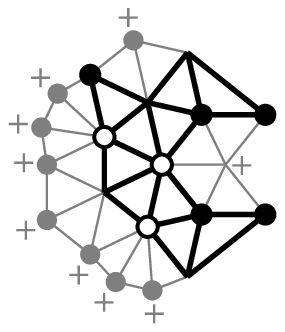}}}
\end{pspicture}
\renewcommand{\captionlabeldelim}{:}\caption{The configuration in bold appears in every cartwheel fitting the drawn part but is never well-positioned. Vertices with $\gamma_P^+(v) = \infty$ are lightened for clarity.}\renewcommand{\captionlabeldelim}{}
\label{wellpositioned}
\end{center}
\end{figure}
}
\newcommand{\spanningtreefig}{
\renewcommand{\height}{1.5}
\usualfigureheader
\psset{xunit=0.7cm, yunit=0.7cm}
\rput(0,1){\psline[linewidth=1.3pt,arrowsize=0.22]{->}(-0.8,0)(0.8,0)}
\rput(-7, -0.1){
\rput(0.3,-1.5){\includegraphics{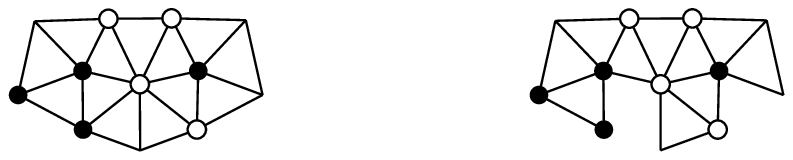}}}
\end{pspicture}
\renewcommand{\captionlabeldelim}{:}\caption{Preparing a spanning tree of faces.}\renewcommand{\captionlabeldelim}{}
\label{spanningtree}
\end{center}
\end{figure}
}
\newcommand{\bigradiusesfig}{
\renewcommand{\height}{1.5}
\usualfigureheader
\psset{xunit=0.7cm, yunit=0.7cm}
\rput(-9.4, -0.1){
\rput(0.3,-1.5){\includegraphics{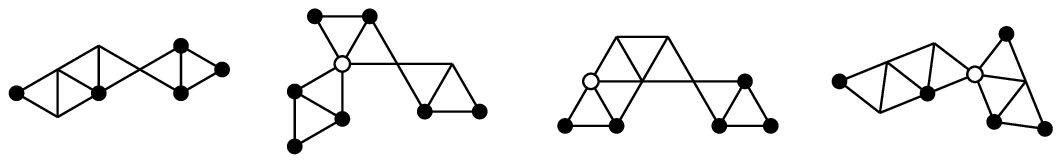}}}
\end{pspicture}
\renewcommand{\captionlabeldelim}{:}\caption{Some $D$-reducible configurations of radius greater than two that fit in a second neighborhood.}\renewcommand{\captionlabeldelim}{}
\label{bigradiuses}
\end{center}
\end{figure}
}
\newcommand{\unencodablefig}{
\renewcommand{\height}{1.3}
\usualfigureheader
\psset{xunit=0.7cm, yunit=0.7cm}
\rput(-2.3, 0.8){
\rput(0.3,-1.5){\includegraphics{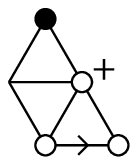}}}
\end{pspicture}
\renewcommand{\captionlabeldelim}{:}\caption{A rule that cannot be encoded as a part.}\renewcommand{\captionlabeldelim}{}
\label{unencodable}
\end{center}
\end{figure}
}
\begin{document}

\title{An Unavoidable Set of $D$-Reducible Configurations}
\author{John P. Steinberger}
\maketitle

\begin{abstract}
We give a new proof of the four-color theorem by exhibiting an unavoidable
set of \nc $D$-reducible configurations. The existence of such a set had
been conjectured by several researchers including Stromquist \cite{stromquist}, Appel and
Haken \cite{ah1} and Robertson, Sanders, Seymour and Thomas \cite{rsst}.
\end{abstract}

\section{Introduction}

The four-color theorem states that every map of simply connected countries can be colored in four colors such that countries with a common boundary segment are given different colors. Equivalently, one can say that every planar graph is vertex 4-colorable. The statement was conjectured in 1852 but remained open until 1976, when Appel and Haken \cite{ah1, ah2} announced their proof. Subsequently Robertson, Sanders, Seymour and Thomas \cite{rsst} 
obtained a substantially more compact proof. Both proofs are computer-aided.

In this paper we give a new proof similar to the one of Robertson et al. Our proof is technically simpler than both existing proofs because we use only the most basic type of reducible configuration, known as ``$D$-reducible''. However the proof is still computerized and, in fact, requires more computer time because it uses more configurations to make up for the loss of non-$D$-reducible configurations. 

All existing proofs of the four-color theorem follow an argument outlined by Heesch \cite{heesch}. The strategy is to exhibit an ``unavoidable set'' of ``reducible configurations''. Reducible configurations are local arrangements of countries that cannot appear in a smallest counterexample because their presence in a map implies the map can be colored from a smaller map by induction. A set of configurations is ``unavoidable'' if every map contains at least one configuration from the set. The unavoidable set of Appel and Haken has 1476 elements and the unavoidable set of Robertson et al. has 633 elements. Our set has \nc elements.
b
Reducible configurations come in two main flavors: $C$- and $D$-reducible configurations. Testing a configuration for $D$-reducibility is easy, but $C$-reducibility is more delicate. To show a configuration is $C$-reducible one needs to find a ``reducer'' that is safe to use in the induction step. There are a large number of possible reducers to consider and there is no single established safety test. Robertson et al.~only use reducers that fit a specific  safety criterion, whereas Appel and Haken use a wider variety of reducers which lead to a wider variety of complications. There is no safety check necessary for $D$-reducible configurations, as there are no reducers. 

Over half the configurations in the unavoidable sets of Robertson et al$.$ and about one third the configurations in the unavoidable set of Appel and Haken are $C$-reducible. However both teams conjectured the existence of an unavoidable set of $D$-reducible configurations. This paper settles the conjecture. Stromquist \cite{stromquist} made the same conjecture a little before Appel and Haken \cite{ah1}.

Proving that a set of configurations is unavoidable requires the use of a ``discharging argument'', a technique invented by Heesch. Appel and Haken's discharging argument is specified by 487 discharging rules. The bulk of their proof is devoted to showing the correctness of the discharging rules; the task is immense and, apparently, no full independent check has ever been made. Robertson et al.~use a different type of discharging procedure, which enables them to have vastly fewer rules (only 32) but checking the procedure by hand is of the same order of tedium as for Appel and Haken. However, Robertson et al.~supply machine-readable proof scripts that allow the discharging procedure to be checked by a computer in a few minutes. These proof scripts are probably the most significant difference between Robertson et al.'s proof and Appel and Haken's (though there are other notable differences).

Our own discharging procedure is quite similar to Robertson et al.'s. We have a few more rules (42 instead of 32) but the essential features are the same. As a consequence, the correctness of our discharging procedure can be established with the same type of machine-readable script. The script is verified by Robertson et al.'s original program with some very minor modifications (for example the constant specifying the maximum recursion depth needs to be incremented). The second program Robertson et al.~left available, which is for checking configuration reducibility, can also be used to check our own set of configurations after a one-line change to accomodate larger configurations, so altogether no new programs are necessary for checking our proof. All one needs to do is swap Robertson et al.'s configurations and rules (contained in two separate files) for our own, and replace the proof scripts. 
\ifarxiv
The necessary files and programs are in the `aux' folder accomponaying this arxiv submission. 
\else
The necessary files and programs are available from can be found with the electronic copy of this article at \arxurl. 
\fi
The changes made to the programs are also documented there.

\begin{table}\label{ringsize}
\begin{center}
\begin{tabular}{|c|c|c|c|c|c|c|c|c|c|}
\hline
Ring Size & 6--8 & 9 & 10 & 11 & 12 & 13 & 14 & 15 & 16 \\ 
\hline
\% & 0.26 & 0.53 & 2.30 & 6.60 & 16.7 & 26.1 & 26.3 & 15.5 & 5.7 \\ 
\hline
\end{tabular}
\end{center}
\caption{Percentage breakdown of the unavoidable set of \nc configurations according to ring-size.}
\end{table}

The cost of testing a configuration for reducibility increases roughly by four-fold with its ``ring-size'' (the number of countries surrounding the configuration). Appel and Haken used only configurations of ring-size at most 14, which they correctly estimated would give them enough configurations to find an unavoidable set. The unavoidable set of Robertson et al.~also has only configurations of ring-size 14 or less. Our unavoidable set uses configurations of ring-size up to 16 (a configuration of ring-size 16 takes a few minutes on average to test, but the exact amount of time varies greatly). The ring-size percentage breakdown of our unavoidable set is shown in Table \ref{ringsize}. We were not able to find an unavoidable set using only configurations of ring-size 14 or less, but we have no opinion as to whether such a set exists or not.

Altogether we tested over 130'000 configurations during the course of our work. Of these, $42.8\%$ were $D$-reducible and about $90\%$ were either $C$- or $D$-reducible (from a random sample, eschewing\footnote{While ensuring the safety of reducers is a necessary and annoying aspect of $C$-reducibility, it is rare to loose a configuration because none of the reducers are safe.} safety tests on the reducers, and not counting non-minimally reducible configurations). By comparison, Robertson et al.~tested around 14'000 configurations, of which $92.7\%$ were either $D$- or $C$-reducible and $54.4\%$ were $D$-reducible. Thus the ``loss rate'' consisting of configurations tested that turn out not to be usable climbs from approximately $10\%$ to $50\%$ when $C$-reducible configurations are disallowed.

In a recent tour de force, Gonthier \cite{gonthier} announced a formal language implementation of the Robertson et al.~proof. In this setting, the terms ``simple map'' and ``four-colorable'' are first formalized from axioms. Then the statement ``\texttt{M simple map} $\implies$ \texttt{M four-colorable}'' is verified by a proof checker
which reads through 60'000 lines of formal language proof. In order to trust the theorem has been proved, however, one does not need to check the 60'000 lines; one only needs to check the file containing the definitions, the file containing the statement, and to trust the proof checker 
itself. This constitutes an improvement over traditional proofs because the proof-checker verifies
all details of the proof, including manual combinatorial arguments that are only sketched or left entirely to the reader in the original proof. 
Presumably, since our proof is structurally identical to Robertson et al.'s, it should also be possible to transcribe it formally. 

Our reducibility computations use boolean closures as opposed to the more powerful ``block-count'' closures \cite{cohen, swart}. Thus all our configurations are $D$-reducible in the classical sense of Heesch \cite{heesch}, which fits the purpose of having the proof as non-technical as possible. This is also a stronger result since any boolean-$D$-reducible configuration is block-count-$D$-reducible but not vice-versa. Appel and Haken and Robertson et al.~do not use any block-count reducible configurations in their unavoidable sets either (the concept was not even known when Appel and Haken published). Robertson et al.~did investigate the possibility, however, and advertised an alternate unavoidable set of only 591 configurations containing some block-count reducible configurations.

Section 2 of the paper formulates the graph-theoretic problem and gives basic results on minimal counterexamples; Section 3 introduces the notion of a configuration and of $D$- and $C$-reducibility; Section 4 gives the discharging procedure and its proof of correctness. Part of the purpose of the paper is to give an account of the proof that is as self-contained and as accessible as possible. 

\section{Minimal Counterexamples}

The graph-theoretic formulation of the problem is obtained from a map by creating a vertex for every country and joining vertices of adjacent countries by an edge, with one edge added for every common border between two countries (there may be several). The resulting graph is called the \emph{dual} of the map, is planar, and can be vertex 4-colored if and only if the original map is four-colorable. The dual is loopless or else could not be colored at all. In map-theoretic language, this means that a country is not allowed to have a border with itself. 
Giving precise topological definitions of the terms ``map'', ``country'', ``border'' and so on, and deducing the existence of a well-defined dual graph leads to complications that would take us afield \cite{fritsch, gonthier}. Instead we will take for granted that the four-color theorem reduces to the 4-colorability of loopless planar graphs (or one can simply adopt the latter formulation as the statement to be proved). 

A \emph{minimal counterexample} is a simple (no multiple edges) loopless planar graph that is not 4-colorable but such that every loopless planar graph with fewer vertices is 4-colorable. Clearly, if the four-color theorem is false then minimal counterexamples must exist, so if minimal counterexamples do not exist then the theorem is true. 

One can assume that a minimal counterexample is a triangulation, as triangulating the faces of a planar graph only makes the graph harder to color. Or one can prove that, in fact, any minimal counterexample must be a triangulation: if the graph has a face of 4 or more sides then there is always a pair of non-adjacent vertices on the boundary of the face (Jordan curve theorem); identifying these vertices yields a loopless graph with fewer vertices whose coloring gives a coloring for the original graph. It is equally easy to see that a minimal couterexample cannot have any vertices of degree less than four, as one could color the graph by removing and then reinserting these vertices.

Our next, less trivial observation about minimal counterexamples is due to Birkhoff \cite{birkhoff}. A \emph{circuit} $C$ is a closed, non-self-intersecting walk on the edges of a graph. We write $E(C)$ for the set of edges of a circuit.

\begin{definition}\label{shortcircuitdef}
A \emph{short circuit} of a planar graph $G$ is a circuit $C$ with $|E(C)| \leq 5$ such that the two open regions bounded by $C$ contain at least one vertex each of $G$ if $|E(C)| \leq 4$ and at least two vertices each if $|E(C)| = 5$.
\end{definition}

\begin{lemma}[Birkhoff]\label{int6a}
A minimal counterexample has no short circuits. 
\end{lemma}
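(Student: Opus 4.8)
The plan is to argue by contradiction: assume the minimal counterexample $G$ contains a short circuit $C$ and then produce a proper 4-coloring of $G$. By the Jordan curve theorem $C$ splits the plane into an inside and an outside region. Let $G_1$ consist of $C$ together with everything drawn inside, and $G_2$ of $C$ together with everything drawn outside, each with the induced edges; then $V(G_1)\cap V(G_2)=V(C)$ and $E(G_1)\cup E(G_2)=E(G)$, since no edge of the plane graph $G$ crosses $C$. The vertex condition in Definition~\ref{shortcircuitdef} is exactly what guarantees that both $G_1$ and $G_2$ have strictly fewer vertices than $G$ (for $|E(C)|\le 4$ one vertex on each side suffices, for $|E(C)|=5$ two are needed). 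Both $G_1$ and $G_2$ are simple, loopless and planar, so by minimality each admits a proper 4-coloring, say $\phi_1$ and $\phi_2$. If I can modify $\phi_1$ and $\phi_2$ so that they agree on $V(C)$, then, because every edge of $G$ lies in $G_1$ or in $G_2$, the common coloring glues to a proper 4-coloring of $G$ — the desired contradiction. Thus the whole problem reduces to \emph{reconciling the two colorings along the short circuit}.

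First I would dispose of the small cases. A circuit in a simple loopless graph has at least three edges, so $|E(C)|\in\{3,4,5\}$. If $|E(C)|=3$ the three vertices of $C$ are mutually adjacent and hence receive three distinct colors under any proper coloring; a global permutation of the colors of $\phi_2$ makes it agree with $\phi_1$ on $C$, and we are done. If $|E(C)|=4$, say $C=abcd$ in cyclic order, the only information on the boundary is the \emph{type} of the coloring, i.e.\ whether the opposite pairs $\{a,c\}$ and $\{b,d\}$ are monochromatic. The key tool is a Kempe-chain interchange: recoloring a connected two-color component lets me switch whether $a$ and $c$ share a color. Here planarity is decisive — a Kempe chain inside the disk joining $a$ to $c$ separates $b$ from $d$, so an $a$–$c$ chain and a $b$–$d$ chain cannot coexist; consequently at least one of the two sides can be re-typed to match the other, after which a permutation of colors finishes the job.

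The hard part will be the pentagon case $|E(C)|=5$, which is precisely why the definition demands two vertices on each side: a single interior vertex would make $G_1$ a wheel on a $5$-cycle (an ordinary degree-$5$ vertex), which certainly may occur in a minimal counterexample, so the reduction must genuinely use the extra room. For $C=v_1\cdots v_5$ I would again classify the proper $4$-colorings of the $5$-cycle up to color permutation into their finitely many boundary types, and then show, via a short sequence of Kempe-chain interchanges carried out inside one of the two disks, that the type produced by $\phi_1$ can be transformed into the type required by $\phi_2$. As in the quadrilateral case, the engine is the planar non-crossing property of Kempe chains: a chain joining two ring vertices cuts the disk and thereby obstructs the competing chains, so the recolorings needed to align the two boundary patterns can never all be blocked at once. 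I expect this pentagon reconciliation — enumerating the cases and verifying that some admissible interchange always exists — to be the main obstacle; it is the delicate point on which Kempe's original (and in general false) argument happens to stay valid, precisely because the ring is so small. Once the colorings agree on $V(C)$ in all three cases, gluing yields a proper $4$-coloring of $G$, contradicting the assumption that $G$ is a counterexample, and the lemma follows.
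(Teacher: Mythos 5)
Your overall framework --- split $G$ along $C$ into $G_1$ and $G_2$, 4-color each side by minimality, then reconcile the two colorings on $V(C)$ --- is the same as the paper's, and your triangle case is correct. The gap is in the claim that Kempe interchanges applied to the two given colorings $\phi_1,\phi_2$ can always reconcile them, in both the 4-ring and 5-ring cases. The set of ring colorings that extend to one side is closed under the interchange operations you describe, and two such Kempe-closed (``consistent'') sets can be disjoint already for the 4-ring: the colorings in which $a$ and $c$ receive the same color (exactly the colorings lifted from the smaller graph in which $a$ and $c$ are identified) and the colorings in which $a$ and $c$ receive distinct colors (exactly those extending to the ring plus the chord $ac$). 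If every 4-coloring of $G_1$ gave $a,c$ equal colors while every 4-coloring of $G_2$ gave them distinct colors, then no sequence of interchanges inside either disk could ever produce matching boundary colorings, and nothing in your argument rules this scenario out; your assertion that ``at least one of the two sides can be re-typed to match the other'' is precisely what fails. Planarity of chains does not save you here, because chains for two color pairs that share a color (say an $a$--$c$ chain in red/blue and a $b$--$d$ chain in green/blue) may cross at a vertex of the shared color.

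The paper closes this hole by invoking minimality again on \emph{modified} graphs, which your proposal never does. For the 4-ring it colors $G_1$ with the chord $ac$ added (and, separately, with $bd$ added) to conclude that each side extends either a coloring with all four ring colors distinct, or else both a coloring where only $a=c$ and one where only $b=d$; only after this structural information is in hand does a single Kempe interchange finish the argument. For the 5-ring two further constructions are needed: identifying two ring vertices (the paper's observation (\#2), which carries a looplessness technicality handled via the already-proved 3- and 4-ring cases), and adding an apex vertex adjacent to all five ring vertices --- this cone is where the hypothesis ``at least two interior vertices'' is genuinely used (so that the cone is smaller than $G$, forcing each side to admit a ring coloring using only three colors); it is not there merely to make $G_1$ smaller, as your sketch suggests. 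The 5-ring endgame is also not a direct transformation of $\phi_1$'s boundary type into $\phi_2$'s: it is an alternating chase (the paper's (\#1) and (\#2)) around the ten coloring classes, terminating in a contradiction. Without coloring these auxiliary smaller graphs, the reconciliation step cannot be completed, so the proposal as written does not prove the lemma.
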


\noindent Thus, for example, a minimal counterexample $G$ cannot contain a vertex of degree 4, since the 4-circuit of edges around a vertex of degree 4 would be a short circuit of length 4 in $G$ (checking that the two regions defined by the 4-circuit both contain vertices of $G$ 
uses the fact that $G$ is a simple triangulation of minimum degree 4). Kempe \cite{kempe} had already shown that a minimal counterexample cannot have a vertex of degree 4.

The import of Lemma \ref{int6a} is best conveyed by an alternate characterization of the result also due to Birkhoff. The \emph{distance} between two vertices $u, v$ of a connected graph $G$ is the length of the shortest path between them (zero if $u = v$) and the \emph{second neighborhood} of a vertex $v$ is the subgraph induced by vertices at distance at most 2 from $v$. 
In an arbitrary triangulation the second neighborhood of a vertex may look different than two neatly concentric triangular strips. For example 
a second neighborhood can disconnect its complement (this could even be effected by the first neighborhood).
But Birkhoff observed that the absence of short circuits implied well-structuredness of second neighborhoods in the sense of the following definition:

\begin{definition}
The second neighborhood of a vertex $v$ of a planar triangulation is \emph{well-behaved} if the subgraph of $G$ induced by vertices at distance $i$ from $v$ is a circuit for $i = 1, 2$.
\end{definition}

\begin{lemma}[Birkhoff]\label{nn}
The vertices of a minimal counterexample have well-behaved second neighborhoods.
\end{lemma}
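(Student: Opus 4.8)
The plan is to fix a vertex $v$ of a minimal counterexample $G$ and read off the combinatorial structure forced by Lemma \ref{int6a}. Recall that a minimal counterexample is a simple planar triangulation of minimum degree at least $5$ (vertices of degree $\leq 4$ having been excluded), which we may regard as a triangulation of the sphere. If $v_1, \dots, v_d$ (with $d \geq 5$) are the neighbors of $v$ listed in the cyclic order in which they occur around $v$, then the triangular faces incident to $v$ are exactly the $v v_i v_{i+1}$ (indices mod $d$); in particular each edge $v_i v_{i+1}$ is present, so the first neighborhood $N_1 = \{v_1, \dots, v_d\}$ contains the cycle $C_1 = v_1 v_2 \cdots v_d$.

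First I would show $C_1$ is chordless, so that $N_1$ induces exactly the circuit $C_1$, settling the case $i = 1$. If $v_i v_j$ were a chord, with $v_i, v_j$ non-consecutive on $C_1$, then $C = v v_i v_j$ is a $3$-circuit; since $v_i, v_j$ are non-consecutive and $d \geq 5$, both arcs of $C_1$ between them are nonempty, so each open region bounded by $C$ contains at least one neighbor of $v$. Hence $C$ is a short circuit, contradicting Lemma \ref{int6a}.

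For $i = 2$ I would first realize $N_2$ as a closed walk. The cycle $C_1$ separates the sphere into the disk triangulated by the fan at $v$ and a complementary region $R$ containing every remaining vertex. Because $C_1$ is chordless, each boundary edge $v_i v_{i+1}$ lies on a unique triangle $v_i v_{i+1} w_i$ inside $R$ with $w_i \notin N_1 \cup \{v\}$, so $w_i \in N_2$; and listing the triangles of $R$ incident to a single $v_{i+1}$ shows that the neighbors of $v_{i+1}$ lying in $R$ form a path from $w_i$ to $w_{i+1}$. Concatenating these paths as $i$ runs around $C_1$ produces a closed walk $W$ through $N_2$, and since every vertex of $N_2$ is adjacent to some $v_i$ it must occur on $W$, so $W$ meets all of $N_2$.

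It then remains to show $W$ is a simple cycle and that $N_2$ has no chord, which I expect to be the crux. The strategy is to turn each possible degeneracy into a short circuit of length $3$, $4$, or $5$. A vertex $u \in N_2$ adjacent to two non-consecutive neighbors $v_i, v_j$ of $v$ yields the $4$-circuit $v v_i u v_j$, which separates the two arcs of $C_1$ and is short; ruling this out shows each $u$ attaches to a single consecutive arc of $N_1$, and a companion $4$-circuit (the one enclosing the middle neighbor when $u$ touches three consecutive $v_i$, together with the observation that $u \sim v_i, v_{i+1}$ forces $u = w_i$) bounds the arc length and forces $W$ to traverse each $N_2$ vertex exactly once, so $W$ is a simple cycle. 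Finally, if $u, u' \in N_2$ are adjacent but non-consecutive on $W$, choosing $v_b \sim u$ and $v_c \sim u'$ at the ends of the $C_1$-arc lying under the chord $u u'$, I would exhibit either the $4$-circuit $v_b u u' v_c$ when that arc is a single edge or the $5$-circuit $v v_b u u' v_c$ when it has an interior vertex, checking that the region under the chord holds the intervening apexes $w_k$ and arc-interior vertices while the opposite region holds $v$ and the far arc of $C_1$. The main technical obstacle is exactly this bookkeeping: one must verify the thresholds in Definition \ref{shortcircuitdef}—at least one vertex per side for lengths $3, 4$ and at least two per side for length $5$—in every configuration, and it is the two-vertex threshold that makes the $5$-circuit case delicate. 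With every degeneracy contradicting Lemma \ref{int6a}, $N_2$ induces the circuit $W$, which completes the proof.
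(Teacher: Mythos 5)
Your proposal is correct and follows essentially the same route as the paper's proof: use the triangulation property to build the cyclic adjacency structure at distances 1 and 2 (your walk $W$ is the paper's concatenated sequence $w_{i,1},\dots,w_{d,d_d-4}$), then kill every degeneracy—chords of $C_1$, repeated or over-attached distance-2 vertices, chords of $W$—by exhibiting a short circuit of length 3, 4, or 5 contradicting Lemma \ref{int6a}. In fact you spell out more of the final bookkeeping (which specific 4- and 5-circuits arise, and the two-vertex threshold in the length-5 case) than the paper, which dismisses those steps with ``details omitted.''
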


\noindent Lemma \ref{int6a} implies Lemma \ref{nn}. Lemma \ref{nn} also implies Lemma \ref{int6a} if one assumes the knowledge that minimal counterexamples have no vertices of degree 4, which can be demonstrated independently of Lemma \ref{int6a}.

\begin{proof}[Proof of Lemma \ref{nn} from Lemma \ref{int6a}]
Let $v$ be a vertex of a minimal counterexample $G$ with no short circuits. Thus $G$ has no vertices of degree less than 5. 
Let $d$ be the degree of $v$ and let 
$e_1, \ldots, e_d$ be a clockwise ordering of the edges around $v$. Let $u_1, \ldots, u_d$ be the endpoints of $e_1$, $\ldots$, $e_d$. Because $G$ is loopless and simple, $v, u_1, \ldots, u_d$ are distinct. Because $G$ is a triangulation, $u_i, u_{i+1}$ are adjacent for $1 \leq i \leq d$ (setting $u_{d+1} = u_1$).
Moreover there are no other adjacencies between the $u_i$'s or else $G$ would have a short circuit of length 3. Therefore the vertices at distance 1 from $v$ induce a circuit of length $d$ in $G$.

Let $d_1, \ldots, d_d$ be the degrees of $u_1, \ldots, u_d$ in $G$. For $1 \leq i \leq d$, let $w_{i,1}, \ldots, w_{i,d_i-3}$ be the $d_i - 3$ vertices adjacent to $u_i$ not in the set $\{v, u_1$, $\ldots$, $u_d\}$, and labeled such that $w_{i,1}, \ldots, w_{i,d_i-3}, u_{i+1}$, $v$, $u_{i-1}$ is a clockwise list of the vertices adjacent to $u_i$ (with indices having values in the set $\{1, \ldots, d\}$ taken cyclically).
Then $w_{i,d_i-3} = w_{i+1,1}$ because $G$ is a triangulation, and $w_{i,j}$ is adjacent to $w_{i,j+1}$ for $1 \leq j < d_i - 3$ for the same reason. 
Let $S$ be the sequence of vertices $w_{1,1}, \ldots, w_{1,d_1-4}, w_{2,1}, \ldots, w_{2,d_2-4}, \ldots, w_{d,1}, \dots, w_{d_d-4}$ ($S$ is nonempty because $d_i \geq 5$ for all $i$). Then $S$ contains all vertices at distance 2 from $v$ (because every neighbor of a $u_i$ has been accounted) and the elements of $S$ are cyclically adjacent as just observed.
One can easily check that the vertices of $S$ are distinct from the fact that $G$ has no short circuits of length $\leq 4$, and that there are no other adjacencies in $S$ besides the cyclic adjacencies from the fact that $G$ has no short circuits of length $\leq 5$ (details omitted). Thus the vertices at distance 2 from $v$ also induce a circuit in $G$.
\end{proof}

\noindent We sum up our knowledge of minimal counterexamples with the following definition:

\begin{definition}
A triangulation is \emph{internally 6-connected} if it has no short circuit and has minimum degree 5.
\end{definition}

\noindent Thus one can restate Lemma \ref{int6a}:

\begin{lemma}[Birkhoff]\label{int6}
A minimal counterexample is an internally 6-con-nected triangulation.
\end{lemma}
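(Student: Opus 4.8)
The statement merely repackages the facts already assembled, so the plan is to verify, for a minimal counterexample $G$, the three ingredients in the definition of an internally 6-connected triangulation: that $G$ is a triangulation, that it has no short circuit, and that its minimum degree equals $5$. The first two are immediate from what precedes, and the only genuine work is promoting the minimum degree from $4$ to $5$.

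For the first ingredient, as noted in the opening remarks one may take $G$ to be a triangulation (or prove it directly: a face with at least four sides contains two non-adjacent boundary vertices by the Jordan curve theorem, and identifying them produces a smaller loopless planar graph whose coloring lifts). The absence of short circuits is exactly Lemma \ref{int6a}. Finally, $G$ has minimum degree at least $4$ by the remove-and-reinsert argument already cited: a vertex of degree at most $3$ could be deleted, the smaller graph colored by minimality, and the vertex reinserted and assigned one of the $\geq 1$ colors it does not see.

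The heart of the matter is excluding degree $4$. The plan is to let $v$ be a vertex of degree $4$ with neighbors $u_1,u_2,u_3,u_4$ listed cyclically around $v$. Since $G$ is a triangulation, consecutive neighbors are adjacent, so $u_1u_2u_3u_4$ is a $4$-circuit $C$. One open region bounded by $C$ contains $v$ and nothing else, because $v$'s only neighbors are $u_1,\dots,u_4$ and the triangulation fills that region with the four triangles $vu_iu_{i+1}$. The other region must contain at least one vertex, whence $C$ is a short circuit of length $4$ and Lemma \ref{int6a} is contradicted; thus no vertex has degree $4$ and the minimum degree is $5$. Assembling the three ingredients shows $G$ is an internally 6-connected triangulation.

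The one step requiring care is confirming that the second region of $C$ is nonempty. I would argue this from the size of $G$: every loopless planar graph on at most $5$ vertices is $4$-colorable, since the only graph on $5$ vertices needing a fifth color is $K_5$, which is nonplanar, so a minimal counterexample has at least $6$ vertices; as the region containing $v$ holds only $v$ and $C$ carries the four $u_i$, the remaining vertex or vertices lie in the other region. (Equivalently one can invoke minimum degree $\geq 4$: if the outer region were empty then $G$ would be the unique $5$-vertex triangulation obtained by adding one diagonal to $C$, which has two vertices of degree $3$, a contradiction.) This point is the only place where the hypotheses beyond Lemma \ref{int6a} are actually used, and it is slight; everything else is a direct appeal to the preceding lemma and remarks.
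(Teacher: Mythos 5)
Your proposal is logically coherent, and its one substantive step---ruling out a degree-$4$ vertex by exhibiting the surrounding $4$-circuit as a short circuit, with a careful check that the outer region is nonempty---is correct, and indeed more detailed than the paper's own parenthetical remark following Lemma~\ref{int6a}. The problem is where you locate the work. Your load-bearing step is ``the absence of short circuits is exactly Lemma~\ref{int6a},'' but in this paper that lemma is never proved anywhere except in the very proof you were asked to supply. The paper introduces Lemma~\ref{int6} with ``Thus one can restate Lemma~\ref{int6a},'' and its proof is explicitly announced as ``a proof of Lemma~\ref{int6}, or equivalently of Lemma~\ref{int6a}.'' That is, the two lemmas are one theorem in two phrasings sharing a single proof, and that proof is the actual mathematical content: a minimal counterexample has no short $3$-circuit (split the graph along the triangle, color both pieces by minimality, permute colors to match), no short $4$-circuit (a Kempe chain interchange argument juggling the coloring classes \cR\cG\cB\cY, \cR\cG\cR\cY, \cR\cG\cB\cG{} and \cR\cG\cR\cG{} of the $4$-circuit), and no short $5$-circuit (the implications $(\#1)$ and $(\#2)$ among the ten classes $\rA{1},\dots,\rA{5},\rB{1},\dots,\rB{5}$, bootstrapped to a contradiction). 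None of this content appears in your proposal.

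So, judged against the paper's proof, there is a genuine gap: by invoking Lemma~\ref{int6a} you assume precisely the statement whose proof was requested, in its equivalent phrasing. If your argument were substituted for the paper's, the no-short-circuit property would rest solely on the citation to Birkhoff, defeating the paper's stated aim (``To keep the paper self-contained we finish this section with a proof of Lemma~\ref{int6}'') and making the section circular within its own logical structure, since the only proof of the lemma you cite is this one. What was needed was not the repackaging---the paper disposes of that in the remarks between the two lemma statements, essentially as you do---but the Kempe chain case analysis for circuits of length $3$, $4$, and $5$.
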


\noindent To keep the paper self-contained we finish this section with a proof of Lemma \ref{int6}, or equivalently of Lemma \ref{int6a}. The ideas used---Kempe chain arguments---recur in Section 3, so beginners should at least read part of the proof.

\begin{proof}[Proof of Lemma \ref{int6}] Let $G$ be a minimal counterexample. We wish to show that $G$ cannot contain a short circuit of length 3, 4 or 5. The fact that $G$ cannot contain a short circuit of length 3 is trivial, since if $G$ has a short 3-circuit one can color the graph consisting of $G$ minus the interior of the 3-circuit and the graph consisting of $G$ minus the exterior of the 3-circuit separately, and recover a coloring for $G$ by permuting one of the colorings to match the other. 

To see that $G$ cannot contain a short circuit of length 4 let $a$, $b$, $c$, $d$ be the vertices in order of a short circuit of length 4 of $G$. Let $\gA$, $\gB$ be the graphs obtained by deleting respectively the interior and exterior of the circuit $abcd$. One can color $\gA$ and $\gB$ by induction, but there is no guarantee the two the colorings induced on $abcd$ will be equivalent under permutations. In fact there are four equivalence classes under permutations for colorings of the 4-circuit; if the colors are $\{\cR, \cG, \cB, \cY\}$, representatives for the four classes are \cR\cG\cB\cY, \cR\cG\cR\cY, \cR\cG\cB\cG{} and \cR\cG\cR\cG.

Let $\gA'$ be the graph obtained from $\gA$ by adding an edge between $a$ and $c$; then $\gA'$ is a loopless planar graph 
because $a$ and $c$ are distinct (definition of a circuit). Since $\gA'$ has fewer vertices than $G$ it is 4-colorable, and induces a coloring on the circuit $abcd$ where $a$ and $c$ have distinct colors. Hence $\gA$ either admits a 4-coloring where $abcd$ are colored \cR\cG\cB\cY{} or admits a coloring where $abcd$ are colored \cR\cG\cB\cG. One could instead add an edge between $b$ and $d$, which would lead to the conclusion that $\gA$ either admits a 4-coloring where $abcd$ are colored \cR\cG\cB\cY{} or admits a 4-coloring where $abcd$ are colored \cR\cG\cR\cY. Thus either the coloring \cR\cG\cB\cY{} extends to $\gA$ or else both the colorings \cR\cG\cB\cG{} and \cR\cG\cR\cY{} extend to $\gA$.

As the same observation applies to $\gB$, we can assume without loss of generality that \cR\cG\cB\cY{} extends to $\gA$ and that \cR\cG\cB\cG{} and \cR\cG\cR\cY{} extend to $\gB$, or else $\gA$ and $\gB$ induce some common coloring on $abcd$. Now consider a coloring of $\gA$ where $abcd$ are colored \cR\cG\cB\cY{}. There may or may not be a path of $\{\cR,\cB\}$-vertices from the vertex $a$ (which is \cR) to the vertex $c$ (which is \cB) in $\gA$. Assume first that there isn't. Then the maximal connected component of \cR-\cB{} vertices attached to $c$ does not include $a$. If we switch the colors \cR{} and \cB{} inside of the component we obtain another valid coloring of $\gA$
in which the colors of $a$, $b$ and $d$ remain \cR, \cG{} and \cY, but the color of $c$ is switched to \cR, so the coloring \cR\cG\cR\cY{} extends to $\gA$, which settles the case because that coloring also extends to $\gB$. 

In the other case there is a path of \cR-\cB{} vertices from $a$ to $c$ in $\gA$. In this case 
there cannot be a \cG-\cY{} path from $b$ to $d$ (Jordan curve). Therefore we can switch the colors \cG{} and \cY{} in, say, the maximal connected component of \cG-\cY{} vertices containing $d$ without affecting $b$, and obtain the non-equivalent coloring \cR\cG\cB\cG{} of $abcd$. But this coloring extends to $\gB$, so we are again done. Thus a minimal counterexample cannot contain a short 4-circuit.

Finally to see that $G$ cannot contain a short circuit of length 5 let $abcde$ be a short circuit of length 5 in $G$, and again let $\gA$ and $\gB$ be the graphs obtained by deleting respectively the interior and exterior of the 5-circuit $abcde$ from $G$. Both $\gA$ and $\gB$ can be 4-colored by induction, and each 4-coloring induces a 4-coloring of the 5-circuit $abcde$. There are only two different equivalence classes under color permutations of colorings of the 5-circuit up to rotation---representatives are \cR\cG\cR\cG\cB{} and \cR\cG\cR\cY\cB---for a total of 10 different equivalence classes of colorings of the 5-circuit. We let $\rA{1}, \dots, \rA{5}$ be the 5 equivalence classes whose representatives are the cyclic permutations of \cR\cG\cR\cG\cB{} and $\rB{1}, \dots, \rB{5}$ the 5 equivalence classes whose representatives are the cyclic permutations of  \cR\cG\cR\cY\cB{} (with the cyclic shifts being arranged such that, say, $\rA{2}$ has representative \cB\cR\cG\cR\cG, etc). 

If $H$ is a graph with boundary $abcde$ we write $H|\rA{i}$ to mean that colorings from $\rA{i}$ can be extended to $H$, and likewise for $H|\rB{i}$. The conclusion will quickly follow from the following two observations, valid for $H = \gA, \gB$ (indices are taken cyclically):\\

	  {
	  \renewcommand{\theenumi}{\arabic{enumi}}
	  \renewcommand{\labelenumi}{(\theenumi)}
	  \begin{list}{2}{
	    \setlength{\itemindent}{23pt}
	    \setlength{\itemsep}{0pt}
	    \setlength{\parskip}{-100pt}
            \setlength{\leftmargin}{0pt}}
\vspace{-1ex}
\item[(\#1)] $H|\rA{i} \wedge \neg H|\rA{i+1} \implies H|\rB{i}$
\item[(\#2)] $\neg H|\rA{i+1} \wedge \neg H|\rB{i+1} \implies H|\rA{i}$
\vspace{-0.5ex}
           \end{list}
          }

\noindent To see $(\#1)$, assume for example that $\gH|\rA{1}$ but that $\neg \gH|\rA{2}$. In particular, the coloring \cR\cG\cR\cG\cB{} extends to $\gH$, but the coloring \cR\cG\cB\cG\cB{} does not. Fix a coloring of $\gH$ that induces \cR\cG\cR\cG\cB{} on $abcde$. If there is no \cR-\cB{} path from $a$ to $c$ in $\gH$ then a \cR-\cB{} color interchange on the \cR-\cB{} component containing $c$ yields the coloring \cR\cG\cB\cG\cB{}, a contradiction, so there must be a \cR-\cB{} path from $a$ to $c$. But then there
is no \cG-\cY{} path from $b$ to $d$, so by doing a \cG-\cY{} color interchange on the \cG-\cY{} component containing $d$ we get the coloring \cR\cG\cR\cY\cB{} from $\rB{1}$, as desired. 

To prove $(\#2)$ form a smaller graph by identifying the vertices $a$ and $c$ in $\gH$; the coloring induced on $abcde$ must give equal colors to $a$ and $c$, so colorings from one of the equivalence classes $\rA{1}, \rB{1}, \rA{5}$ must extend to $\gH$, which is equivalent to saying that $\neg \gH|\rA{1} \wedge \neg \gH|\rB{1} \implies \gH|\rA{5}$, as desired. This slips a technicality: if $a$ and $c$ are already adjacent in $\gH$ then identifying them makes a loop, and the theorem cannot be applied by induction (only loopless planar graphs can be 4-colored; if the reader feels this is unfair and that loops should simply be ignored, they should consider that the coloring of the smaller graph with $a$ and $c$ identified does not pull back to a valid coloring of $\gH$). The way around this case is to notice that if $a$ and $c$ are adjacent in $\gH$, then $G$ already contains a short circuit of length 3 or 4.

To finish the proof that $G$ cannot contain a short circuit of length 5, form a new graph from $\gA$ by adding a vertex in the middle of the circuit $abcde$, connected to all vertices of the circuit. The new graph is planar, loopless, and is smaller than $G$ because the interior of $abcde$ has at least two vertices in $G$. By induction it can be 4-colored, which shows that $\gA$ must have a coloring that uses only 3 colors for the circuit $abcde$, so $\gA|\rA{i}$ for some $i$. Since likewise $\gB|\rA{j}$ for some $j$, and since we can assume that there is no equivalence class whose colorings extend both to $\gA$ and $\gB$, there must be some $i$ such that $\gA|\rA{i}$ and $\neg \gA|\rA{i+1}$. Without loss of generality, say that $i = 1$. Then $(\#1)$ implies $\gA|\rB{1}$ and $(\#2)$ applied with $H = \gB$ and $i = 5$ implies $\gB|\rA{5}$. Since $\neg \gB|\rA{1}$ (because $\gA|\rA{1}$) one can apply $(\#1)$ again to get $\gB|\rB{5}$ and then $(\#2)$ again to get $\gA|\rA{4}$. Keeping on one reaches the conclusion that $\gA|\rA{2}$, a contradiction. This finishes the proof that a minimal counterexample cannot have a short circuit.
\end{proof}

\section{Reducible Configurations}

Birkhoff also investigated circuits of length 6 but concluded that similarly simple results wouldn't be forthcoming for circuits of that length.
Without going into details, Birkhoff convinced himself by finding two disjoint sets of colorings of the 6-circuit such that (i) each set was 
self-consistent with respect to 
Kempe chain arguments, and (ii) each set accomodated colorings imposed on the 6-circuit by replacing the interior of the circuit 
with any small\footnote{Birkhoff carried out his analysis to include every graph with 3 or fewer internal vertices (more precisely: any planar graph with an external 6-sided face and at most 3 internal vertices). Extending the analysis to include graphs with more than three internal vertices seemed unlikely to change the result, as the number of colorings that extend to the 6-circuit only tends to grow with the number of internal vertices.} 
graph, including all combinations for identifying vertices of the 6-circuit or adding chordal edges to the 6-circuit. 
Thus, it seemed, two arbitrary graphs $\gA$ and $\gB$ bounded by a common 6-circuit could \emph{a priori} induce two disjoint sets of colorings on the circuit. 

\birkhoffdiamondfig

Instead of working with two unknown graphs on either side of the circuit Birkhoff then tried fixing one of the graphs. He found a pair of graphs $(\gA, \gA')$ with 6-sided external faces where $\gA'$ had fewer vertices than $\gA$ and such that any graph $\gB$ bounded by a 6-circuit and having at least one coloring of the 6-circuit in common with $\gA'$ 
also had a coloring in common with $\gA$ (this uses a Kempe chain argument). Thus no minimal counterexample could contain the graph $\gA$, or else one could replace $\gA$ by $\gA'$ and argue the existence of a coloring by induction. 
Birkhoff's graph $\gA$ and its ``reducer'' $\gA'$ are shown in Fig$.$ \ref{birkhoffdiamond}. The graph $\gA$ is known as the ``Birkhoff diamond'', and is the first ``reducible configuration'' discovered (it is also the smallest).

To correctly apply Birkhoff's argument one must show 
that the smaller graph with $\gA$ replaced by $\gA'$ is loopless. One can do this using short circuits. However, a different argument which does not use $\gA'$ avoids the technicality entirely. Indeed, one can show that any graph $\gB$ bounded by a 6-circuit must have a coloring of the 6-circuit in common with $\gA$ just by a more systematic application of Kempe chain arguments (it is unknown whether Birkhoff knew this). 
The difference between the two types of arguments is precisely the difference between $C$- and $D$-reducibility, with the latter argument being $D$-reducibility. When $D$-reducibility fails one must resort to $C$-reducibility, and endure the looplessness check on the reducer.

To go further we must systematize our understanding of Kempe chain arguments. The key notion is that of a \emph{consistent set} of colorings of a circuit. Broadly put, a Kempe chain argument shows that if a coloring $\rC$ of a circuit extends to a graph $G$ bounded by the circuit then there are sets of colorings 
$\mcal{C}_1$, $\ldots$, $\mcal{C}_k$ 
of the circuit, independent of $G$, such that all the colorings from at least one of the $\mcal{C}_i$'s extend to $G$. If we list the elements of $\mcal{C}_i$ as $\rC_{i,1}, \ldots, \rC_{i,n_i}$, we can encode this observation as an implication
\eqn\label{implic}
\rC \implies (\rC_{1,1} \wedge \dots \wedge \rC_{i,n_1}) \vee \dots \vee (\rC_{k,1} \wedge \dots \wedge \rC_{k,n_k}).
\uneq
A set of colorings $\mcal{C}$ of a circuit is then called ``consistent'' if every such implication is satisfied by setting colorings in $\mcal{C}$ to ``true'' and colorings not in $\mcal{C}$ to ``false''. 

Let $R$ be a circuit graph. A \emph{signed path arrangement} for $R$ is a partition of $V(R)$ into subsets $V_1$, $\ldots$, $V_n$ such that $V_i$ is a subset of a connected component of $R\back V_j$ for all $i \ne j$, 
together with a function $s: \{ V_1^1, \dots, V_{n}^{m_n}\} \rightarrow \{ 0, 1\}$ 
where $V_i^1, \dots, V_{i}^{m_i}$ 
are the connected components induced by $V_i$ in $R$. A \emph{color partition} is a partition of the set $\{\cR, \cG, \cB, \cY\}$ into two sets of two elements each. If $\rC$ is a coloring of $R$, $P = (V_1, \ldots, V_n, s)$ is a signed path arrangement for $R$, and $\theta = \{ \{ \rX, \rY \}, \{ \rW, \rZ \} \}$ is a color partition, then we say that 
\rC{} (resp$.$ $P$) $\theta$-fits $P$ (resp. \rC) 
if (i) the set of maximally connected subgraphs of $R$ colored only with $\rX$ and $\rY$ or only with $\rW$ and $\rZ$ is equal to the set 
$\{ V_1^1, \dots, V_{n}^{m_n}\}$ and (ii) $s(V_{i}^{j_1}) = s(V_{i}^{j_2})$ if and only if the first color of $V_{i}^{j_1}$ is equal to the first color $V_{i}^{j_2}$ as encountered when traveling clockwise around $R$. We simply say that ``\rC{} fits $P$'' if $\rC$ $\theta$-fits $P$ for some $\theta$. A set of colorings $\mcal{C}$ of $R$ is \emph{consistent} if for every $\rC \in \mcal{C}$ and every color partition $\theta$ there is a signed path arrangement $P$ that $\theta$-fits $\rC$ such that every coloring of $R$ that fits $P$ is in $\mcal{C}$.

It is easy to see that the set of colorings induced by a planar graph $G$ on a circuit $R$ bounding a region of $G$ is consistent. 
To check this, take a coloring $\rC$ of $R$ that extends to $G$ and a color partition $\theta = \{ \{ \rX, \rY \}, \{ \rW, \rZ \} \}$. 
Fix a 4-coloring of $G$ equal to $\rC$ on $R$ and declare vertices $u, v$ of $R$ to be in the same set $V_i$ if and only if there is an $\rX\rY$-path or a $\rW\rZ$-path from $u$ to $v$ in $G$ 
(in particular, $u$ and $v$ must both be colored $\rX$ or $\rY$ or must both be colored $\rW$ or $\rZ$).
Say that $n$ different sets $V_1, \ldots, V_n$ are obtained this way. 
Because $G$ is planar, each $V_i$ is a subset of the same component of $R\back V_j$ for all $j \ne i$.
Define $s$ such that $\rC$ $\theta$-fits $P = (V_1, \ldots, V_n, s)$, in the obvious way (there is more than one $s$ that works; any will do). 
We need to check that every coloring of $R$ that fits $P$ extends to $G$. Let $H_1, \ldots, H_n$ be the $\rX\rY$- or $\rW\rZ$-components of $G$ containing $V_1, \ldots, V_n$ respectively. Making a color interchange (switching $\rX$ and $\rY$, or $\rW$ and $\rZ$) in any $H_i$ results in a valid coloring of $G$ whose restriction to the circuit $R$ again $\theta$-fits $P$, and such that the $\rX\rY$- and $\rW\rZ$-components of $G$ containing $V_1, \ldots, V_n$ are still $H_1, \ldots, H_n$. By repeating the process one can thus make any combination of color interchanges to the $H_i$'s, and it is easy to see that the colorings of $R$ generated this way and by permutations of $\{ \rX, \rY, \rZ, \rW \}$ are all the colorings that fit $P$.

It is useful to keep in mind the order of the quantifiers that define whether a set $\mcal{C}$ is consistent: for \emph{every} coloring $\rC \in \mcal{C}$ and \emph{every} color partition $\theta$ there is \emph{some} signed path arrangement $P$ that $\theta$-fits $\rC$ such that \emph{every} coloring that fits $P$ is in $\mcal{C}$. Thus for every coloring \rC{} there are three implications like \eqref{implic}: one for each color partition. The disjunction is taken over all signed path arrangements that fit \rC, with each implicant being the conjunction of the colorings that fit the path arrangement.

In practice one most often wants to know whether a given set $\mcal{C}_1$ of colorings of a $k$-circuit has a nonempty consistent subset or not, and, if so, what is the biggest consistent subset (because the union of two consistent sets is consistent, there is a unique maximal consistent subset). To do this one first computes the set $\mcal{P}_1$ of signed path arrangements such that $P \in \mcal{P}_1$ if and only if every coloring that fits $P$ is in $\mcal{C}_1$. Then one computes the set $\mcal{C}_2$ of colorings 
consisting of all $\rC \in \mcal{C}_1$ 
that $\theta$-fit a signed path arrangement in $\mcal{P}_1$ for each color partition $\theta$. Then one constructs $\mcal{P}_2$ from $\mcal{C}_2$ like $\mcal{P}_1$ is constructed from $\mcal{C}_1$, and so forth until $\mcal{C}_i = \mcal{C}_{i-1}$ or until $\mcal{C}_i = \emptyset$. (This shows the idea, but does not describe the reducibility program we actually used, which is the one of Robertson et al. Indeed, Robertson et al.~do all their reducibility computations using Tait's dual formulation of vertex 4-colorability in terms of edge 3-colorability. Readers should consult their paper and the note accompanying their program \cite{reduce} for further details.)

For example if $\mcal{C}$ is the set of 4-colorings that the Birkhoff diamond (the graph on the left of Fig$.$ \ref{birkhoffdiamond}) induces on its 6-circuit and 
$\mcal{C}^*$ is the set of all colorings of the 6-circuit, then the above algorithm shows that $\mcal{C}^*\back \mcal{C}$ has no nonempty consistent subset. Thus any graph obtained by gluing the Birkhoff diamond to
a 6-circuit bounding the face 
 of a planar graph $G$ cannot be a minimal counterexample, because $G$ will be 4-colorable by induction and the set of colorings that $G$ induces on the 6-sided face, being consistent, must include some element of $\mcal{C}$, leading to a coloring of the entire graph. 

Not every face of a connected planar graph $G$ is bounded by a circuit, given that $G$ could have cut-vertices or bridges (edges with the same face on either side), but it is useful to glue graphs bounded by circuits to those faces as well. To glue a circuit $R$ to an arbitrary face $f$ one simply traces a walk around the boundary of $f$ in the natural way. The length of $R$ must be equal to the number of edges plus the number of bridges incident with $f$. The walk defines a map $\phi: V(R) \rightarrow V(G)$.
We say that $\phi$ 
\emph{wraps $R$ around $f$}. 
Every coloring of $G$ induces a coloring of $R$ by setting the color of $v \in V(R)$ to the color of $\phi(v) \in V(G)$. We call this coloring of $R$ the \emph{lift} of the coloring by $\phi$. The set of 4-colorings of $R$ that are obtained in this way is consistent, as can easily be seen from the same argument as above. 
In conclusion, we have:

\begin{lemma}\label{conslemma}
Let $G$ be a planar graph, $f$ be a face of $G$, and let $\phi$ map a circuit $R$ around $f$. Let $\mcal{C}$ be the set of lifts by $\phi$ of 4-colorings of $G$. Then $\mcal{C}$ is consistent.
\end{lemma}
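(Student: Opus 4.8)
The plan is to mimic, essentially verbatim, the verification already given for a circuit $R$ \emph{bounding a region} of $G$; the statement of Lemma \ref{conslemma} is the same assertion with the inclusion $V(R) \st V(G)$ replaced by an arbitrary wrapping map $\phi$, and the one genuinely new feature is that $\phi$ need not be injective (a bridge or cut-vertex incident with $f$ forces two vertices of $R$ to share an image). So I would fix a coloring $\rC \in \mcal{C}$ and a color partition $\theta = \{\{\rX,\rY\},\{\rW,\rZ\}\}$, and produce a signed path arrangement $P$ that $\theta$-fits $\rC$ and all of whose fitting colorings lie in $\mcal{C}$; doing this for every $\rC$ and every $\theta$ is exactly what consistency demands.

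First I would fix a $4$-coloring $\kappa$ of $G$ whose lift by $\phi$ is $\rC$, and look at the Kempe components of $\kappa$ for the partition $\theta$, i.e.\ the maximal connected subgraphs of $G$ colored only with $\{\rX,\rY\}$ or only with $\{\rW,\rZ\}$. Call $H_1, \ldots, H_n$ those that meet $\phi(V(R))$; these are pairwise vertex-disjoint, since each colored vertex lies in exactly one of them. I would then set $V_i = \{v \in V(R) : \phi(v) \in V(H_i)\}$. The connected components $V_i^1, \ldots, V_i^{m_i}$ of $V_i$ inside the circuit $R$ are precisely the maximal monochromatic $\rX\rY$- and $\rW\rZ$-arcs of $\rC$, so condition (i) in the definition of $\theta$-fitting holds automatically, and I can define the sign function $s$ by recording, for each arc $V_i^j$, the color first met when traversing it clockwise, normalized so that $s(V_i^{j_1}) = s(V_i^{j_2})$ iff those first colors agree. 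This makes $\rC$ $\theta$-fit $P = (V_1, \ldots, V_n, s)$.

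The hard part will be checking that $P$ is a legitimate signed path arrangement, namely that each $V_i$ is contained in a single connected component of $R \back V_j$ for every $j \ne i$. This is the only place planarity is used and the only place the non-injectivity of $\phi$ must be handled with care. I would argue it from the planar embedding: $\phi$ wraps $R$ around $f$ following the boundary walk, so the cyclic order of $V(R)$ is the rotation order of the boundary of $f$, and every $\phi(v)$ lies on that boundary. If some $V_i$ were split by $V_j$---i.e.\ $V_i$ had vertices on two different arcs of $R \back V_j$---then joining those $V_i$-vertices through $H_i$ and the separating $V_j$-vertices through $H_j$ would exhibit two disjoint connected subgraphs of $G$ whose attachments to the boundary circle of $f$ interleave, which a Jordan-curve argument forbids in a planar graph. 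The bookkeeping is slightly more delicate than in the region case because repeated boundary vertices (from bridges or cut-vertices) can make several vertices of $R$ share an image; but such vertices automatically share a color and a Kempe component, so they land in the same $V_i$ and never create a genuine interleaving.

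Finally I would close exactly as in the region case: every coloring of $R$ that fits $P$ is obtained, up to a global permutation of $\{\rX,\rY,\rW,\rZ\}$, by performing color interchanges ($\rX \leftrightarrow \rY$ or $\rW \leftrightarrow \rZ$) inside some subset of the components $H_1, \ldots, H_n$ of $G$. Each such interchange leaves $\kappa$ a valid $4$-coloring of $G$ whose lift by $\phi$ again $\theta$-fits $P$ with the same $H_i$, and conversely all colorings fitting $P$ arise this way; hence each of them is the lift of a $4$-coloring of $G$ and so belongs to $\mcal{C}$. As $\rC$ and $\theta$ were arbitrary, $\mcal{C}$ is consistent.
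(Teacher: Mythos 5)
Your proposal is correct and takes essentially the same approach as the paper: the paper's entire proof is the remark that the in-text Kempe-component/color-interchange argument for circuits bounding a region "can easily be seen" to carry over to a circuit wrapped around a face by $\phi$, which is precisely the adaptation you carry out. Your write-up is in fact more detailed than the paper's, spelling out the non-interleaving (planarity) step and the non-injectivity of $\phi$ that the paper leaves implicit.
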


\noindent We have not yet given a precise meaning to the term ``configuration''. 
A \emph{near-triangulation} is a connected planar graph in which all regions are triangles except possibly the infinite region. Following Robertson et al., a \emph{configuration $K$} is a near-triangulation $G(K)$ and a map $\gm_K : V(G(K)) \rightarrow \nn$ with the following properties:

	  {
	  \renewcommand{\theenumi}{\arabic{enumi}}
	  \renewcommand{\labelenumi}{(\theenumi)}
	  \begin{list}{3}{
	    \setlength{\itemindent}{35pt}
	    \setlength{\itemsep}{0pt}
	    \setlength{\parskip}{-5pt}
            \setlength{\leftmargin}{0pt}}
\item[(i)] for every vertex $v$, $G(K)\backslash v$ has at most two components, and if there are two then $\gm_K(v) = d(v) + 2$ where $d(v)$ is the degree of $v$ in $G(K)$,

\item[(ii)] for every vertex $v$, if $v$ is not incident with the infinite region, then $\gm_K(v) = d(v)$, and otherwise $\gm_K(v) > d(v)$; and in either case $\gm_K(v) \geq 5$,

\item[(iii)] $K$ has ring-size $\geq$ 2, where the \emph{ring-size} of $K$ is defined to be $\sum_v (\gm_K(v) - d(v) - 1)$, summed over all vertices $v$ incident with the infinite region such that $G(K)\back v$ is connected.
           \end{list}
          }

\noindent 
A configuration is meant to be embedded in a bigger graph, and the meaning of the various components of the definition is fairly transparent: $\gm_K$ specifies the degree of a vertex in the larger graph; the ring-size is the arity of the face left when $K$ is deleted from the larger graph; a configuration may have one or several cut-vertices, and these must be adjacent to exactly two vertices outside the configuration. The embedding notion is made precise below.

For the Birkhoff diamond, $G(K)$ is the graph on four vertices consisting of two adjacent triangles and $\gamma_K$ is identically 5. To draw a configuration one usually uses vertex shapes (an idea of Heesch) to show the values of $\gamma_K$, which is possible because in practice $\gamma_K \leq 11$. The list of vertex shapes is shown in Fig$.$ \ref{vertexshapes}. Some configurations are shown in Figs$.$ \ref{fivefivepair}, \ref{bernhart}, \ref{minorconfs} and \ref{proofconfs}. The first configuration of Fig$.$ \ref{minorconfs} is the Birkhoff diamond\footnote{We have been referring to the graph on the top left of Fig$.$ \ref{birkhoffdiamond} as the ``Birkhoff diamond'' but this is somewhat unorthodox; the first configuration of Fig$.$ \ref{minorconfs} is the Birkhoff diamond, and the graph of Fig$.$ \ref{birkhoffdiamond} is the \emph{free completion} of the Birkhoff diamond.}. 

\vertexshapesfig

The \nc configurations of our unavoidable set require over 30 pages to draw at a reasonable size. Instead of appending them as a figure to the paper, which would be of limited use, they are available online in both machine- and human-readable forms. For the rest of the paper we let $\mcal{U}_{\ncmath}$ denote the set of configurations in the file {\tt U\_{\ncmath}.conf} and drawn in {\tt U\_{\ncmath}.pdf}. 
\ifarxiv
The format of ${\tt U\_\ncmath.conf}$ is explained in the `README' file.
\else
These files are available with the electronic copy of this article at \arxurl. 
The format of ${\tt U\_\ncmath.conf}$ is also explained there.
\fi

Let $K$ be a configuration. A near-triangulation $S$ is a \emph{free completion of $K$ with ring $R$} if:

	  {
	  \renewcommand{\theenumi}{\arabic{enumi}}
	  \renewcommand{\labelenumi}{(\theenumi)}
	  \begin{list}{3}{
	    \setlength{\itemindent}{44pt}
	    \setlength{\itemsep}{0pt}
	    \setlength{\parskip}{-5pt}
            \setlength{\leftmargin}{0pt}}

\item[(i)\hfill] $\!\!\!\!\!$ $R$ is a circuit of length $\geq 2$ of $S$ and bounds the infinite region of $S$,

\item[(ii)\hfill] $\!\!\!\!\!$ $G(K) = S \back V(R)$ and $G(K)$ is an induced subgraph of $S$ and

\item[(iii)\hfill] $\!\!\!\!\!$ every vertex of $S$ not in $V(R)$ has degree $\gm_K(v)$ in $S$.
\end{list}
}

One can easily check that $|V(R)|$ is the ring-size of $K$ as defined in point (iii) of the definition of a configuration. The condition that a cut-vertex $v$ of a configuration have $\gm_K(v) = d(v) + 2$ ensures that the free completion of a configuration is unique (up to planar homeomorphism), so we speak of ``the'' free completion of a configuration. Checking the existence of a free completion from the definition of a configuration is fairly straightforward.

\begin{definition}
Let $K$ be a configuration and let $S$ be the free completion of $K$ with ring $R$. Then $K$ is \emph{$D$-reducible} if the maximal consistent subset of $\mcal{C}^*\back \mcal{C}$ is empty, where $\mcal{C}^*$ is the set of all 4-colorings of $R$ and $\mcal{C} \subseteq \mcal{C}^*$ is the set of colorings of $R$ that extend to $S$.
\end{definition}

Let $G$, $H$ be two planar graphs. We say that $G$ and $H$ are \emph{spherically homeomorphic} if the point-at-infinity completions of $G$ and $H$ are homeomorphic (otherwise put: the stereographic projections of $G$ and $H$ onto a sphere are homeomorphic subsets of the sphere). 

\begin{definition}\label{appearsdef}
A configuration $K$ \emph{appears} in a triangulation $T$ if $G(K)$ is spherically homeomorphic to an induced subgraph of $T$ such that the image of every vertex $v \in V(G(K))$ has degree $\gm_K(v)$ in $T$.
\end{definition}

\noindent If we had embedded all our graphs on spheres from the start then Definition \ref{appearsdef} would seem more natural, but in that case we 
could not have referred to the ``infinite region'' so easily, as was practical for the definition of configurations and free completions. 

If a configuration $K$ appears in a triangulation $T$ the finite faces of $K$ are not necessarily mapped to faces of $T$ by the (not necessarily unique) spherical homeomorphism, but this is will be the case if $T$ is internally 6-connected (in particular, has no short-circuits of length 3) and $G(K)$ is not a triangle. 

If a configuration $K$ appears in an internally 6-connected triangulation $T$ then the free completion $S$ of $K$ is not necessarily a subgraph of $T$: some vertices from the ring could be identified in $T$. However it is easy
to check that removing the image of $G(K)$ from $T$ leaves a face $f$ whose arity is the ring-size of $K$, and that there is a map $\phi$ wrapping the ring $R$ of the free completion $S$ around $f$ such that a vertex $v \in V(G(K))$ is adjacent to a vertex $w$ from $R$ if and only if the image of $v$ in $T$ is adjacent to $\phi(w)$. Let $T'$ be the graph obtained by removing the image of $G(K)$ from $T$. Then a 4-coloring of $T'$ extends to $T$ if and only if the coloring induced on $R$ by the $T'$-coloring extends to $S$. If $T$ is a minimal counterexample then $T'$ is 4-colorable and the set of colorings induced on $R$ by colorings of $T'$ is consistent by Lemma \ref{conslemma}. Since none of the colorings induced on $R$ should extend to $S$ or else $T$ would be 4-colorable, $K$ cannot be $D$-reducible.
In other words:

\begin{lemma}\label{cannot}
A $D$-reducible configuration cannot appear in a minimal counterexample.
\end{lemma}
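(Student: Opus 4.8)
The plan is to argue by contradiction: assume that a $D$-reducible configuration $K$ does appear in a minimal counterexample $T$, and then manufacture a nonempty consistent subset of $\mcal{C}^* \back \mcal{C}$, which contradicts $D$-reducibility. First I would invoke Lemma \ref{int6} to record that $T$ is an internally $6$-connected triangulation; this is exactly the hypothesis under which the geometry of an appearing configuration is well controlled. Let $S$ be the free completion of $K$ with ring $R$, so that by the definition of $D$-reducibility the set $\mcal{C}^* \back \mcal{C}$ has empty maximal consistent subset, where $\mcal{C}^*$ is the set of all $4$-colorings of $R$ and $\mcal{C}$ is the set of colorings of $R$ that extend to $S$.

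The technical heart is to identify the correct ``outer'' graph and the correct boundary map. I would let $T'$ be the graph obtained from $T$ by deleting the image of $G(K)$ under the spherical homeomorphism of Definition \ref{appearsdef}. The two claims to establish are: (a) the deletion leaves a single face $f$ whose arity equals the ring-size of $K$, i.e. $|V(R)|$; and (b) there is a map $\phi$ wrapping $R$ around $f$ (possibly non-injective, since distinct ring vertices of $S$ may be identified in $T$) such that $v \in V(G(K))$ is adjacent in $T$ to the image of $w \in V(R)$ exactly when $v$ is adjacent to $w$ in $S$. Granting (a) and (b), one obtains the crucial coloring correspondence: a $4$-coloring of $T'$ extends to $T$ if and only if the coloring it induces on $R$ via $\phi$ extends to $S$. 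Indeed, away from $R$ the constraints imposed on a coloring of $G(K)$ by its attachment to $T'$ and its attachment to $S$ are identical once the boundary colors agree, because every vertex of $G(K)$ carries the degree $\gm_K(v)$ prescribed both by the ``appears'' condition and by property (iii) of a free completion, so the two colorings glue to a proper coloring in either direction.

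With the correspondence in hand the conclusion is quick. Since $G(K)$ is nonempty (its ring-size is $\geq 2$), the graph $T'$ has strictly fewer vertices than $T$, so by minimality $T'$ is $4$-colorable; any such coloring induces at least one coloring of $R$, so the set $\mcal{D}$ of lifts by $\phi$ of $4$-colorings of $T'$ is nonempty, and by Lemma \ref{conslemma} it is consistent. No coloring in $\mcal{D}$ can extend to $S$: if one did, the correspondence would yield a $4$-coloring of $T$, contradicting that $T$ is a counterexample. Hence $\mcal{D} \st \mcal{C}^* \back \mcal{C}$ is a nonempty consistent subset, contradicting the $D$-reducibility of $K$.

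I expect the main obstacle to be claims (a) and (b)---verifying that deleting $G(K)$ really does leave a single well-behaved face together with a boundary map having matching adjacencies. The subtlety is that the spherical homeomorphism need not carry finite faces of $K$ to faces of $T$, and ring vertices may coincide in $T$; it is precisely internal $6$-connectivity (in particular the absence of short circuits of length $3$, via Lemma \ref{int6}) together with the hypothesis that $G(K)$ is not a triangle that rules out the degenerate identifications and guarantees that $\phi$ merely wraps $R$ around one face without distorting the attachment of $G(K)$. Once (a) and (b) are in place, the coloring correspondence, the appeal to Lemma \ref{conslemma}, and the final vertex-count argument are all routine.
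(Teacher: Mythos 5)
Your proposal is correct and follows essentially the same route as the paper: deleting the image of $G(K)$ from $T$ to leave a face $f$ of arity equal to the ring-size, wrapping the ring $R$ of the free completion around $f$ via a map $\phi$ with matching adjacencies, establishing the coloring correspondence between $T'$ and $S$, and then combining minimality of $T$ with Lemma \ref{conslemma} to produce a nonempty consistent subset of $\mcal{C}^*\back\mcal{C}$. The paper likewise leaves your claims (a) and (b) as an ``easy to check'' step under internal 6-connectivity, so your identification of that step as the technical heart matches the paper's own presentation.
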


\noindent Thus if one can exhibit a set $\mcal{U}$ of $D$-reducible configurations such that some member of $\mcal{U}$ appears in every internally 6-connected triangulation, minimal counterexamples cannot exist. This is the property of our set $\mcal{U}_{\ncmath}$. Of course one must not forget to show that every configuration in $\mcal{U}_{\ncmath}$ is actually $D$-reducible, but this must be done by computer. 
\ifarxiv
The reader interested in running the computation should consult the README file.
\else
The reader interested in running the computation should download Robertson et al.'s program and our file from \arxurl. 
\fi
Verifying all configurations takes about 10 hours on a modern processor.

\fivefivepairfig

Generally speaking, configurations that have many internal vertices compared to the ring-size are more likely to be reducible because they accept more colorings. For example one can show that configurations having vertices adjacent to more than three ring vertices
are either not $D$-reducible or else are not minimally reducible: some subportion of the configuration is $D$-reducible. Thus all vertices of our configurations have $\gamma_K(v) \leq d(v) + 3$. This result was conjectured by Heesch \cite{heesch} and proved by Tutte and Whitney \cite{tutte} (ideas for the proof are already implicit in Shimamoto \cite{shimamoto}). Heesch also conjectured that minimally reducible configurations could not have ``hanging 5-5 pairs'', which are two adjacent vertices $v, w$ of degree 2 in $G(K)$ with $\gm_K(v) = \gm_K(w) = 5$  (Fig$.$ \ref{fivefivepair}). One can prove this for $D$-reducible configurations and show more generally that minimal $D$-reducible configurations cannot have blocks of size 3 or less (Fig$.$ \ref{fivefivepair}) or cut-vertices adjacent to more than two ring vertices
(such cut-vertices were excluded by our definition of a configuration for reasons of simplicity). These irreducibility results were obtained by several researchers independently in the early 1970's, see Stromquist \cite{stromquist} for an account. Conjecturally, these conditions are all necessary for $C$-reducibility as well.

It is not necessary to understand $C$-reducibility in order to understand our proof, but it seems fitting we give some description of it since part of the interest of our work lies in the difference between $D$- and $C$-reducibility. Let $K$ be a configuration with free completion $S$ and ring $R$. Let $\mcal{C}$ be the set of colorings of $R$ that extend to $S$, and let $\mcal{C}^*$ be the set of all colorings of $R$. We say that $K$ is \emph{$C$-reducible} if the maximal consistent subset of $\mcal{C}'$ of $\mcal{C}^*\back \mcal{C}$ is nonempty (meaning $K$ is not $D$-reducible) and there exists a graph $S'$ with fewer vertices than $S$ whose external face has arity equal to the ring-size of $K$ and such that the set of colorings $\mcal{C}''$ of the external face of $S'$ that extend to $S'$ does not intersect $\mcal{C}'$ 
(one makes the set of colorings of the external face of $S'$ correspond to a set of colorings $\mcal{C}''$ of $R$ by wrapping $R$ around the external face of $S'$). 

When using a $C$-reducible configuration $K$ to show that a graph $G$ where $K$ appears is not a minimal counterexample one replaces $S$ (embedded in $G$) with the graph $S'$. If the resulting graph is loopless one can color it by induction, which shows some coloring not in $\mcal{C}'$ extends to the graph $\gA$ obtained by removing $G(S)$ from $G$. But the set of colorings which extends to $\gA$ is consistent, and being nonempty and not a subset of $\mcal{C}'$ must intersect $\mcal{C}$, so a coloring of $G$ can be recovered. 
Loops can appear in fairly pernicious ways, given that vertices on the boundary of $S$ may have non-cyclic adjacencies in $G$, or may even be identified in $G$ (and in $S'$). The difficulty of checking that the smaller graph is loopless regardless of $G$ is proportional to how different $S'$ is from $S$. Robertson et al.~keep their looplessness check fairly simple by contracting at most four edges of $S$ to make $S'$. Appel and Haken used arbitrary constructions for $S'$, and hence had more to worry about.

\section{Discharging}

As we have stated, configurations tend to be reducible when the ring-size is small compared to the number of vertices of the configuration. 
This is favored by having vertices of small degree (quite intuitively, vertices of small degree ``defend better'' against coloring). In particular, configurations with many vertices of degree 5 and 6 are likely to be reducible. It was already observed by Kempe \cite{kempe} that every triangulation must contain a vertex of degree 5 or less, and the number of such vertices augments rapidly if a triangulation contains many vertices of degree greater than 6. Hence the hope that any triangulation has a reducible configuration somewhere.

Concerning the appearance of vertices of degree 5, one can more exactly show that
\eqn
\label{chargee}
\sum_{v\in V(T)} (6 - d(v)) = 12
\uneq
for any triangulation $T$ where $d(v)$ is the degree of $v$. This is easily obtained from Euler's formula and from the formulas $2E = 3F$ and $\sum_v d(v) = 2E$. Every triangulation of degree $\geq 5$ must thus have at least 12 vertices of degree 5, more if the triangulation contains vertices of degree $\geq 7$. 

One way to interpret \eqref{chargee} is to place a ``charge'' of $6 - d(v)$ on every vertex, so that the charges sum to 12. Then the presence of positive charge is somewhat correlated to the appearance of reducible configurations, but not exactly. The process of discharging consists in ``recalibrating'' the charge distribution via a series of rules that move the charge between the vertices so that, after the recalibration, the presence of positive charge on a vertex exactly coincides with the appearance of a reducible configuration. 
Since the total charge remains positive during the recalibration, some vertex must have positive charge and some reducible configuration must appear somewhere. 

To find a discharging procedure one starts with an incomplete set of rules and one identifies 
``overcharging situations'' where vertices have positive charge but have no reducible configurations in their neighborhoods. One then simply adds new discharging rules to move the charge away from these vertices in the hope of obtaining a more exact calibration. The art lies in choosing rules that improve rather than worsen the situation and in choosing rules that help several overcharging situations simultaneously, if possible. The process stops when there are no overcharging situations left. There is no theoretical guarantee that the process will stop, of course, otherwise one would not need to produce a set of discharging rules in the first place.

\mayerfig
\bernhartfig
\minorconfsfig

It is important to get the discharging procedure started off on the right foot. Our discharging rules are inspired from those of Robertson et al., who themselves start off with a discharging procedure of Mayer \cite{mayer}. 
Let \emph{minor vertices} be vertices that start with a charge $\geq 0$ (vertices of degree 5, 6) and \emph{major vertices} be vertices that start with a negative charge (vertices of degree $\geq 7$). 
The rules of Mayer, shown in Fig$.$ \ref{mayerrules}, remove the charge from vertices of degree 5 and place it on major vertices. After Mayer's discharging rules are applied 
minor vertices 
have zero charge unless they are in the neighborhood of a reducible configuration. 
The correctness of Mayer's rules depends on the configurations of Fig$.$ \ref{minorconfs}. 
In the case when $C$-reducible configurations are allowed the configuration of Fig$.$ \ref{bernhart} (``Bernhart's diamond'') also helps; with this configuration, the charge may stop traveling before encountering a major vertex that would otherwise receive charge. Mayer's discharging rules divide the charge into fractions of 1/10, which remains the unit of exchange for the remainder of the discharging procedure.

After Mayer's discharging rules are applied the overcharged vertices are those of degree 7 and 8 (it is generally hard to overcharge vertices of degree $\geq 9$). These overcharged vertices must then negotiate among themselves. Thus the discharging procedure is two-staged: in the first stage charge is removed from minor vertices and in the second stage charge is redistributed among major vertices (the distinction between the two stages is psychological; there is no notion of sequentiality in the application of discharging rules). The fact that a discharging rule may have both a source and a sink of degree 7 (say) allows a few simple rules to combine among themselves to achieve an overall complex effect. By contrast, Appel and Haken use a diffferent discharging paradigm in which discharging rules all have sources of degree 5 and sinks of degree $\geq 7$. In this case the method for obtaining the discharging procedure is a bit different: when an overcharged vertex is identified, one of the vertices of degree 5 responsible for overcharging the major vertex looks for a different major vertex to pass some of its charge to. Rules cannot combine, which results in a much larger number of rules. Appel and Haken have 487 rules, compared to 32 for Robertson et al. (we have 42). Allaire \cite{allaire} also used a two-stage discharging procedure built on Mayer's rules in his unpublished proof, but it seems he never completed the details.

\rsstrulesfig
\mainrulesfig

In our case we formally define a 
\emph{discharging rule} $L$ to be a tuple $(q, G(L)$, $s$, $t$, $\gm_L^-, \gm_L^+)$ where $q \in \qq$, $G(L)$ is a near-triangulation with no cut-vertices, $s$ and $t$ are adjacent vertices of $G(L)$ each at distance at most two from every vertex in $G(L)$, and 
$\gm_L^-, \gm_L^+$ are functions from $V(G(L))$ to the set $\{ 5, 6, 7, \ldots \} \cup \{\infty \}$ such that $\gm_L^- = \gm_L^+(v) = d(v)$ if $v$ is an internal vertex of $G(L)$ and $d(v) < \gm_L^+(v) \geq \gm_L^-(v) \geq d(v)$ if $v$ is not an internal vertex of $G(L)$, where $d(v)$ is the degree of $v$ in $G(L)$. To draw discharging rules we use the same vertex shapes as for configurations. For a vertex $v$ with $\gm_L^+(v) = \infty$ we put the shape for a vertex of degree $\gm_L^-(v)$ with a `+' next to the vertex. If $\infty > \gm_L^+(v) > \gm_L^-(v) = 5$ we put the shape for a vertex of degree $\gm_L^+(v)$ with a `$-$' next to the vertex. Otherwise $\gm_L^-(v) = \gm_L^+(v)$, except for 
rule 13 of Fig$.$~\ref{mainrules} that has an exceptional vertex with $(\gm_L^-(v), \gm_L^+(v)) = (6, 7)$, for which we use an empty circle\footnote{Rules 14 and 23 of Robertson et al. could also be combined by using this representation. Thus a fair comparison of the two sets of rules would either state the number of rules of Robertson et al.~as 31 or our number of rules as 43. However we did not want to waste figure space for our set, and did not want to create confusion by quoting the number of rules of Robertson et al. as 31 when it is quoted as 32 elsewhere.}. All our rules have $q \in \{\frac{1}{10}, \frac{1}{5} \}$. To show the placement of $s, t$ one puts an arrow on the edge from $s$ from $t$ (in that direction), with a double arrow when $q = \frac{1}{5}$ and a single arrow otherwise. See Figs$.$ \ref{rsstrules}, \ref{mainrules}.

Let $T$ be a triangulation and $H$ an induced subgraph of $T$. We say that a rule $L$ \emph{appears with image $H$, source $u$ and sink $w$ in $T$} if there is a spherical homeomorphism from $G(L)$ to $H$ taking $s$ to $u$ and $t$ to $w$ such that the image under the homeomorphism of each vertex $v$ in $G(L)$ has degree at least $\gm_L^-(v)$ and at most $\gm_L^+(v)$. 
If $u, w$ are adjacent in $T$ the \emph{total charge carried from $u$ to $w$ by $L$} is $q(L)$ times the number of distinct images $H$ in $T$ such that $L$ appears with image $H$, source $u$ and sink $w$ in $T$ (there can be more than one image $H$ because of reflections of the rule; but if $G(L)$ has mirror symmetry and $T$ is internally 6-connected there is at most one image). We put $r(u,w,L)$ for the total charge carried from $u$ to $w$ by $L$. 
We put $r_\mcal{L}(u,w) = \sum_{L \in \mcal{L}}r(u,w,L)$ for any finite set $\mcal{L}$ of discharging rules.

For a triangulation $T$, a set of rules $\mcal{L}$ and $u \in V(T)$ we define the \emph{charge $c_\mcal{L}(u)$ of $u$ with respect to $\mcal{L}$} to be
\eqn
\label{chargo}
c_\mcal{L}(u) = 6 - d(v) - \sum_w r_\mcal{L}(u,w) + \sum_w r_\mcal{L}(w,u)
\uneq
where the two sums are taken over all vertices adjacent to $u$. It follows directly from \eqref{chargo} and \eqref{chargee} that
\eqn
\label{sumcharge}
\sum_{u\in V(T)} c_{\mcal{L}}(u) = 12,
\uneq 
since every term $r_\mcal{L}(u,w)$ appears once positively and once negatively in the sum. Thus for any triangulation $T$ and any set of rules $\mcal{L}$ there is some $u \in V(T)$ such that $c_\mcal{L}(u) > 0$.

Fig$.$ \ref{mainrules} contains our rules. For comparison, Fig$.$ \ref{rsstrules} shows the rules of Robertson et al. The first seven rules of each set implement Mayer's discharging procedure; note there are slight differences in the first seven rules between our set and the set of Robertson et al., which are due to the loss of Bernhart's diamond (Fig$.$ \ref{bernhart}).

Some of our rules from the first line transfer charge between two vertices even when there is a reducible configuration present; for example the third rule if all the vertices have their smallest degree. This is harmless. One could split such rules into two or more rules of equivalent effect that would avoid such ``unnecessary'' charge transfers, but the resulting discharging procedure would not be any easier to check and the number of rules would be greater. 

Note that $c_\mcal{L}(u)$ only depends on the second neighborhood of $u$, because every vertex of a rule is at distance at most two from both the source and sink. 
Later this will greatly simplify the enumeration of 
cases for which $c_{\mcal{L}}(u) > 0$, given that second neighborhoods are well-behaved in internally 6-connected triangulations (Lemma \ref{nn}).
Another consequence is that reducible configurations must be sought within second neighborhoods, because only the second neighborhood determines the charge of a vertex. 
Thus all the configurations of $\mcal{U}_{\ncmath}$ are contained in the second neighborhood of a vertex, as is the case for the unavoidable set of Robertson et al. 
The existence of such a set was originally conjectured by Heesch \cite{heesch}. 

To show the correctness of our set of rules we must show that if $u$ is a vertex of an internally 6-connected triangulation with $c_{\mcal{L}_{42}}(u) > 0$ then 
the second neighborhood of $u$ contains a configuration from $\mcal{U}_{\ncmath}$. The proof is divided into two main parts: the cases when $5 \leq d(u) \leq 11$ and the case when $d(u) \geq 12$. For the first case, separate machine-readable proofs are written for each degree $d(u)$; these are essentially exhaustive analyses of the possible second neighborhoods of $u$. In the second case one shows that $r_{\mcal{L}_{42}}(v,u) \leq \frac{1}{2}$ for any neighbor $v$ of $u$ as long as no reducible configurations appear around $u$. Then $c_{\mcal{L}_{42}}(u) \leq 6 - d(u) + \frac{1}{2}d(u) \leq 6 - 12 + \frac{1}{2}12 = 0$, so $u$ is not overcharged.

While this gives the idea some precise definitions are missing. For example we have not defined what it means for a second neighborhood to ``contain'' a reducible configuration, even though this may seem intuitively obvious. In fact a second neighborhood, as defined, is an induced subgraph of the triangulation and carries no information about the degrees of its outer vertices, so it would be meaningless to define configuration appearance only with respect to this subgraph. What we need is a data structure that carries degree information about all vertices in the second neighborhood, whence the upcoming definition.

A \emph{cartwheel} $W$ is a near-triangulation $G(W)$ with a function $\gm_W: V(G(W)) \rightarrow \{5, 6, 7, \ldots \}$ such that $W$ contains two disjoint induced circuits $C_1$ and $C_2$ and a vertex $u$ not on $C_1$ or $C_2$, such that

	  {
	  \renewcommand{\theenumi}{\arabic{enumi}}
	  \renewcommand{\labelenumi}{(\theenumi)}
	  \begin{list}{3}{
	    \setlength{\itemindent}{42pt}
	    \setlength{\itemsep}{0pt}
	    \setlength{\parskip}{-5pt}
            \setlength{\leftmargin}{0pt}}

\item[(i)\hfill] $\!\!\!\!\!$ $V(G(W)) = \{u\} \cup V(C_1) \cup V(C_2)$,

\item[(ii)\hfill] $\!\!\!\!\!$ $u$ is adjacent to all vertices of $C_1$ and to no vertices of $C_2$,

\item[(iii)\hfill] $\!\!\!\!\!$ the set of vertices adjacent to the infinite region is $V(C_2)$, and

\item[(iv)\hfill] $\!\!\!\!\!$ $\gm_W(v) = d(v)$ for every internal vertex $v$ of $G(W)$, where $d(v)$ is the degree of $v$ in $G(W)$.
\end{list}
}

\noindent One can check that $u, C_1$ and $C_2$ are uniquely determined for any cartwheel $W$. We say $u$ is the \emph{hub} of $W$.

Let $W$ be a cartwheel and let $u, w$ be adjacent vertices of $G(W)$, one of which is the hub of $W$. We say that a rule $L$ \emph{appears with image $H$, source $u$ and sink $w$ in $W$} if there is a (planar) homeomorphism from $G(L)$ to $H$ taking $s$ to $u$ and $t$ to $w$ such that 
$\gm_L^-(v) \leq \gm_W(v') \leq \gm_L^+(v)$ for every $v \in V(G(L))$, where $v'$ is the image of $v$ under the homeomorphism. We then define the charge $o_{\mcal{L}}(u,w)$ carried from $u$ to $w$ by a set of rules $\mcal{L}$ analogously as $r_\mcal{L}(u,w)$ is defined for triangulations. The \emph{charge $c_{\mcal{L}}(W)$ of $W$ with respect to $\mcal{L}$} is 
$$
c_{\mcal{L}}(W) = 6 - d(u) - \sum_{w}o_\mcal{L}(u,w) + \sum_wo_\mcal{L}(w,u)
$$
where $u$ is the hub of $W$ and the sums are taken over all vertices adjacent to $u$.

If $T$ is a triangulation and $v \in V(T)$ a cartwheel $W$ \emph{appears in $T$ with hub $v$} if there is a spherical homeomorphism taking $G(W)$ to the second neighborhood of $v$, such that the hub of $W$ is mapped to $v$ and such that each vertex $w \in V(G(W))$ is mapped to a vertex with degree $\gm_W(w)$ in $T$. It is a short step from Lemma \ref{nn} to see that for every vertex $v$ of an internally 6-connected triangulation $T$ there is a cartwheel $W = W(v)$ appearing\footnote{This is where the proof necessitates spherical homeomorphisms; this statement would be false if cartwheel appearance were defined using planar homeomorphisms.} in $T$ with hub $v$. The cartwheel $W$ is unique up to homeomorphism and we speak of ``the'' cartwheel appearing with hub $v$. 

A configuration $K$ \emph{appears} in a cartwheel $W$ (or that $W$ \emph{contains} $K$) if there is a planar homeomorphism from $G(K)$ to an induced subgraph of $G(W)$ such that $\gm_L(v) = \gm_W(v')$ for every vertex $v \in V(G(K))$, where $v'$ is the image of $v$ in $W$.

The following lemmas relate what happens in a cartwheel (charge-wise and configuration-wise) to what happens in an internally 6-connected triangulation containing the cartwheel. The omitted proofs are straightforward.

\begin{lemma}\label{jan}
Let $T$ be an internally 6-connected triangulation and $\mcal{L}$ a set of rules. Then $c_\mcal{L}(v) = c_\mcal{L}(W(v))$ for every vertex $v$ of $T$.
\end{lemma}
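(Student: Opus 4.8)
The plan is to transport everything through the spherical homeomorphism $\psi$ furnished by the appearance of the cartwheel $W = W(v)$ with hub $v$, and to observe that both charges are assembled from identical local data. Write $u$ for the hub of $W$, so that $\psi(u) = v$, $\psi$ carries $G(W)$ onto the second neighborhood of $v$, and $\deg_T(\psi(x)) = \gm_W(x)$ for every $x \in V(G(W))$. Comparing \eqref{chargo} with the definition of $c_\mcal{L}(W)$, both expressions have the shape $6 - d - (\text{outgoing}) + (\text{incoming})$, so it suffices to match the three kinds of terms.

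First, the degree terms agree: the hub $u$ is an internal vertex of $G(W)$, so $d(u) = \gm_W(u) = \deg_T(v) = d(v)$, using well-behavedness of second neighborhoods (Lemma \ref{nn}), which guarantees that the first neighborhood of $v$ is the circuit $C_1$ and hence $d(v) = |V(C_1)| = d(u)$. Second, $\psi$ restricts to a bijection between the neighbors of the hub $u$ in $G(W)$ (namely $V(C_1)$) and the neighbors of $v$ in $T$, so the two sums are indexed compatibly, and it remains to prove, for each neighbor $w$ of the hub with $w' = \psi(w)$, the equalities $r_\mcal{L}(v, w') = o_\mcal{L}(u, w)$ and $r_\mcal{L}(w', v) = o_\mcal{L}(w, u)$.

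These reduce, for each rule $L \in \mcal{L}$, to a correspondence between the images of $L$ in $T$ whose source and sink are $v$ and $w'$ (in either order) and the images of $L$ in $W$ whose source and sink are $u$ and $w$. The essential observation is that every vertex of $G(L)$ lies within distance two of both $s$ and $t$, and one of $s, t$ is sent onto the hub $v$; consequently any qualifying image $H$ in $T$ lies entirely inside the second neighborhood of $v$, that is, inside $\psi(G(W))$. Composing the defining spherical homeomorphism $G(L) \to H$ with $\psi^{-1}$ then yields a homeomorphism $G(L) \to \psi^{-1}(H) \subseteq G(W)$ that preserves the roles of $s$ and $t$, and the degree constraints transfer verbatim because $\deg_T \circ \psi = \gm_W$, so the inequalities $\gm_L^-(\cdot) \le \deg_T(\cdot) \le \gm_L^+(\cdot)$ in $T$ become $\gm_L^-(\cdot) \le \gm_W(\cdot) \le \gm_L^+(\cdot)$ in $W$. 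Running the same construction backwards through $\psi$ gives the inverse map, so the correspondence is a bijection and $r_\mcal{L} = o_\mcal{L}$ term by term.

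The one point demanding care --- and the main obstacle --- is the discrepancy between the homeomorphism types: appearance in $T$ is defined via \emph{spherical} homeomorphisms while appearance in the cartwheel $W$ is defined via \emph{planar} ones. This is exactly the subtlety that forces spherical homeomorphisms into the definition of cartwheel appearance in the first place. It is resolved by noting that, by Lemma \ref{nn}, the second neighborhood of $v$ bounds a disk on the sphere whose boundary circuit is precisely the ring $C_2$ of $W$; every qualifying image $H$ contains the central hub $v$ and never crosses $C_2$, so it is confined to this disk. On a disk, a spherical homeomorphism and a planar homeomorphism of the small near-triangulation $G(L)$, pinned at the interior hub, determine the same embedded copies, so the spherical copies counted in $T$ match the planar copies counted in $W$. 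Hence the two counts, and therefore the two charges, coincide.
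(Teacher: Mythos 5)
Your overall route is the natural one (the paper itself omits this proof as ``straightforward''): transport everything through the spherical homeomorphism $\psi$ realizing $W(v)$ in $T$, observe that any rule image with source or sink $v$ lies in the second neighborhood of $v$ because every vertex of $G(L)$ is within distance two of both $s$ and $t$, check that the degree constraints transfer because $\deg_T\circ\psi = \gm_W$, and match the charge formulas term by term. All of that is correct. The genuine gap is in your resolution of the one point you yourself flag as the main obstacle. The claim that ``on a disk, a spherical homeomorphism and a planar homeomorphism of the small near-triangulation $G(L)$, pinned at the interior hub, determine the same embedded copies'' is false as a general principle, and nothing else in your argument substitutes for it. Concretely: let $G(L)$ be two triangles glued along the edge $st$ (so its outer face is a quadrilateral), and let $H'$ be the same abstract graph drawn inside a disk so that one triangle, say $u\,w\,a'$, forms its outer boundary and the fourth vertex $b'$ sits inside that triangle. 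Then $H'$ is spherically homeomorphic to $G(L)$ (the quadrilateral face has become a bounded face) but not planarly homeomorphic to it, even though $H'$ is confined to a disk and contains the designated hub $u$. So confinement to a disk plus containing the hub does not, by itself, convert spherical copies in $T$ into planar copies in $W$, and the two counts could a priori differ.

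What actually closes the gap is internal 6-connectivity, which your proof never invokes beyond Lemma \ref{nn}. Suppose a spherical homeomorphism $f: G(L)\to H\subseteq T$ (with $v\in H$) pulls back under $\psi^{-1}$ to a non-planar homeomorphism onto $H'=\psi^{-1}(H)\subseteq G(W)$. Then the face of $H'$ containing the infinite region of $G(W)$ corresponds to a bounded, hence triangular, face of $G(L)$; so $H'$ lies in the closed region bounded by a triangle $xyz$ of $G(W)$, on the side away from the infinite region. But in an internally 6-connected triangulation every 3-circuit bounds a face (a non-facial triangle is a short circuit of length 3, contradicting Lemma \ref{int6}), and pulling this back through $\psi$ shows every triangle of $G(W)$ bounds a triangular face of $G(W)$; since that face is bounded, it is exactly the side of $xyz$ away from infinity. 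Hence the region containing $H'$ has empty interior, so $H'$ is the triangle $xyz$ itself, $G(L)\cong K_3$, and in that degenerate case a planar homeomorphism taking $s,t$ to the prescribed source and sink exists anyway (any vertex bijection between plane triangles is realized by a plane homeomorphism), so the image is counted on both sides after all. Inserting this argument in place of the disk assertion makes your proof complete; without it, the crucial bijection between images is unproved.
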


\begin{lemma}\label{jon}
If a configuration $K$ appears in a cartwheel $W$ and $W$ appears in a triangulation $T$ then $K$ appears in $T$.
\end{lemma}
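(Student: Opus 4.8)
The statement to prove is Lemma \ref{jon}: if $K$ appears in a cartwheel $W$ and $W$ appears in a triangulation $T$, then $K$ appears in $T$. The plan is to compose the two homeomorphisms that witness the two hypotheses and check that the degree conditions line up. By definition, $K$ appears in $W$ means there is a \emph{planar} homeomorphism $\alpha$ from $G(K)$ onto an induced subgraph of $G(W)$ with $\gamma_K(v) = \gamma_W(\alpha(v))$ for every $v \in V(G(K))$. By definition, $W$ appears in $T$ with some hub $v_0$ means there is a \emph{spherical} homeomorphism $\beta$ taking $G(W)$ onto the second neighborhood of $v_0$ in $T$, with each $w \in V(G(W))$ mapped to a vertex of degree $\gamma_W(w)$ in $T$.

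\medskip

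First I would form the composite $\beta \circ \alpha$, a spherical homeomorphism from $G(K)$ onto an induced subgraph of $T$ (the image of an induced subgraph under $\beta$ is induced in $T$ because $\beta$ is a homeomorphism onto the second neighborhood of $v_0$, which is itself an induced subgraph). The map $\beta \circ \alpha$ is spherical rather than merely planar, which is exactly what Definition \ref{appearsdef} of configuration appearance requires, so no loss of structure occurs when we pass from the planar map $\alpha$ to the composite. Next I would verify the degree condition: for every $v \in V(G(K))$ we need the image $(\beta \circ \alpha)(v)$ to have degree $\gamma_K(v)$ in $T$. This chains the two given degree equalities: $\gamma_K(v) = \gamma_W(\alpha(v))$ from the appearance of $K$ in $W$, and $\gamma_W(\alpha(v)) = \deg_T(\beta(\alpha(v)))$ from the appearance of $W$ in $T$ (applied to the vertex $w = \alpha(v) \in V(G(W))$). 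Combining gives $\deg_T((\beta\circ\alpha)(v)) = \gamma_K(v)$, as required.

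\medskip

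The one point demanding care is the distinction between planar and spherical homeomorphisms flagged in the footnote after the cartwheel-appearance definition. The inner map $\alpha$ is only guaranteed planar, and the image $\alpha(G(K))$ sits inside $G(W)$, which is drawn in the plane with a distinguished infinite region bounded by $C_2$; but once we compose with the spherical map $\beta$ onto $T$, the infinite region of $W$ is absorbed into the sphere and plays no distinguished role, so the composite is legitimately spherical. The main obstacle is therefore not the degree bookkeeping, which is a two-line chain of equalities, but confirming that composing a planar embedding into $W$ with a spherical embedding of $W$ yields a valid spherical embedding of $K$ into $T$ in the precise sense of Definition \ref{appearsdef}. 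This is exactly why the paper remarks that cartwheel appearance must be defined spherically; the spherical map $\beta$ is what upgrades the planar inner map to the spherical witness the conclusion needs. Once this is granted the lemma follows immediately, which is consistent with the paper's remark that the omitted proofs are straightforward.
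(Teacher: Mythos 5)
Your proof is correct: composing the planar homeomorphism $\alpha$ witnessing $K$ in $W$ with the spherical homeomorphism $\beta$ witnessing $W$ in $T$, chaining the degree equalities $\gamma_K(v) = \gamma_W(\alpha(v)) = \deg_T(\beta(\alpha(v)))$, and noting that an induced subgraph of the (induced) second neighborhood is induced in $T$, is exactly the argument the paper has in mind when it declares the omitted proofs of Lemmas \ref{jan} and \ref{jon} straightforward. Your attention to the planar-versus-spherical point (the planar map $\alpha$ extends to a spherical one, so the composite satisfies Definition \ref{appearsdef}) is the only place where care is needed, and you handle it correctly.
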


\noindent By Lemma \ref{jan} and \eqref{sumcharge} we have
\eqn
\label{positive}
\sum_{v\in V(T)} c_\mcal{L}(W(v)) = 12
\uneq 
for any internally 6-connected triangulation $T$ and any set of rules $\mcal{L}$. Thus 
the following lemma will prove the four-color theorem:

\begin{lemma}\label{appears}
An element of $\mcal{U}_{\ncmath}$ appears in every cartwheel $W$ such that $c_{\mcal{L}_{42}}(W) > 0$.
\end{lemma}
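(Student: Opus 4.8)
The plan is to fix the hub $u$ of $W$ and case-split on its degree $d := \gm_W(u) = d(u)$, which satisfies $d \geq 5$ since cartwheels have minimum degree $5$. Writing the cartwheel charge as
\[
c_{\mcal{L}_{42}}(W) = 6 - d - \sum_w o_{\mcal{L}_{42}}(u,w) + \sum_w o_{\mcal{L}_{42}}(w,u),
\]
with the sums over the $d$ neighbors $w \in V(C_1)$ of $u$, the two halves of the argument correspond to whether the trivial bound $6 - \tfrac{1}{2}d$ is already nonpositive, i.e.\ to whether $d \geq 12$.

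The easy half is $d \geq 12$. Here I would argue the contrapositive: assuming no element of $\mcal{U}_{\ncmath}$ appears in $W$, I would show $c_{\mcal{L}_{42}}(W) \leq 0$. Since every rule of $\mcal{L}_{42}$ carries positive charge, the outgoing sum is nonnegative, so it suffices to bound the incoming charge $o_{\mcal{L}_{42}}(w,u)$ across each edge $wu$ by $\tfrac{1}{2}$; then
\[
c_{\mcal{L}_{42}}(W) \leq 6 - d + \tfrac{1}{2}d = 6 - \tfrac{1}{2}d \leq 0 .
\]
The per-edge bound $o_{\mcal{L}_{42}}(w,u) \leq \tfrac{1}{2}$ is a finite, hand-checkable case analysis on the local structure around $wu$: one enumerates the rules that can fire with sink $u$ across this edge, the tightest case being a degree-$7$ neighbor $w$ feeding the major vertex $u$, which is exactly the analysis recorded in Fig$.$ \ref{degreetwelve}. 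Whenever the tally across a single edge would exceed $\tfrac{1}{2}$, the forced local arrangement already contains a configuration of $\mcal{U}_{\ncmath}$, contradicting the assumption.

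The hard half is $5 \leq d \leq 11$, where $6 - \tfrac{1}{2}d$ is still positive and the crude bound is useless; here one must genuinely exhibit a configuration in every positively charged cartwheel. I would reduce this to a finite verification and delegate it to the machine, running one proof script per hub degree. Finiteness is the delicate point: although a neighbor on $C_1$ of large degree produces arbitrarily many vertices on $C_2$, both the rules (whose thresholds $\gm_L^\pm$ take values in $\{5,6,\ldots\}\cup\{\infty\}$) and the configurations ($\gm_K \leq 11$) are insensitive to exact degrees above a fixed bound, so degrees may be carried as intervals and the second neighborhood grown only as far as the rules and candidate configurations demand. This turns the search into a bounded-depth recursion --- the recursion-depth constant that must be enlarged in Robertson et al.'s program --- which for each $d$ enumerates finitely many degree-labelled cartwheels and checks, via the appearance test of Definition \ref{appearsdef}, that each one either has $c_{\mcal{L}_{42}}(W) \leq 0$ or contains a member of $\mcal{U}_{\ncmath}$.

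The main obstacle is precisely this second half. The case $d \geq 12$ is clean once the $\tfrac{1}{2}$-bound is established, but for $5 \leq d \leq 11$ there is no slack in the charge count, so the argument stands or falls on whether $\mcal{U}_{\ncmath}$ is rich enough to cover every overcharged local picture --- which is why the bulk of the \nc configurations, and essentially all the computer time, are spent here. The engineering of $\mcal{L}_{42}$ (to confine overcharged hubs to small degree and keep per-edge transfers under $\tfrac{1}{2}$) and of $\mcal{U}_{\ncmath}$ (to plug every residual overcharging situation with a $D$-reducible configuration) is the real content; verifying that the two fit together is the computation the proof scripts carry out.
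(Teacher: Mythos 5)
Your proposal matches the paper's own proof in both structure and substance: the split at hub degree $12$, the per-edge bound $r_{\mcal{L}_{42}}(w,u) \leq \tfrac{1}{2}$ for the large-degree case (proved by hand via the case analysis of Fig$.$ \ref{degreetwelve}, with violations forced to contain members of $\mcal{U}_{\ncmath}$), and the delegation of hub degrees $5$ through $11$ to one machine-verified script per degree, made finite by carrying degree intervals on the outer vertices exactly as the paper's ``parts'' do. This is essentially the same argument, so there is nothing further to reconcile.
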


\noindent Indeed, any internally 6-connected triangulation $T$ must have a vertex $v$ such that $c_{\mcal{L}_{42}}(W(v)) > 0$ by \eqref{positive} so a configuration from $\mcal{U}_{\ncmath}$ appears in $W(v)$ by Lemma \ref{appears} and in $T$ by Lemma \ref{jon}. Then $T$ cannot be a minimal counterexample by Lemma \ref{cannot}. But every minimal counterexample 
is an internally 6-connected triangulation by Lemma \ref{int6} so minimal counterexamples do not exist, QED. 

As explained, we will divide the proof of Lemma \ref{appears} according to the degree of the hub of $W$. We first describe the proof for the case when $W$ has a hub of degree $\leq 11$. In this case the proof is given by machine-readable scripts called ``presentation files'' by Robertson et al. There is one presentation file for each hub degree from 5 to 11. Here we can only describe the format of a presentation file;
``doing'' the proof means running the computation. For cartwheels of hub degree 5 and 6 one can also prove Lemma \ref{appears} by hand, as do Robertson et al., but we will not take the time to do this. (However readers should easily be able to convince themselves from Fig$.$ \ref{mayerrules}, given that the configurations of Fig$.$ \ref{minorconfs} are in $\mcal{U}_{\ncmath}$.) While describing the files we refer to a generic unavoidable set $\mcal{U}$ and to a generic set of rules $\mcal{L}$, these two sets being loaded from files at the beginning of the computation.

The upcoming material explaining presentation files is more formally covered in Robertson et al.'s original article and in the notes accompanying their program \cite{rsst, discharge}. Here we just explain enough to point out some aspects of their implementation that affected our proof in particular ways. For example, certain choices of Robertson et al.~limited the type of configuration we could use, and also put restrictions on the set of rules. For reasons of simplicity our terminology does not always match the definitions of Robertson et al. As such our account is just an overview of ideas, not a faithful description of the discharging program.

We give names to the different parts of a cartwheel. Let $W$ be a cartwheel with hub $u$. The \emph{spokes} of $W$ are the vertices adjacent to $u$. The \emph{hats} of $W$ are the vertices at distance 2 from $u$ that are adjacent to exactly two spokes. The \emph{fan vertices} of $W$ are the vertices at distance 2 from $u$ that are adjacent to exactly one spoke. It is easy to check that every vertex is either the hub, a spoke, a hat or a fan vertex. The fan vertices adjacent to a spoke $v$ are called the \emph{fan vertices over~$v$}.

Let $W$ be a cartwheel. 
We say that an induced subgraph $H$ of $G(W)$ is a \emph{part graph of $W$} if the hub as well as every spoke and hat of $W$ is in $H$ and if all fan vertices of a spoke $v$ of $W$ are in $H$ when one fan vertex of $v$ is in $H$. 
We say that a graph $H$ is a \emph{part graph} if it is a part graph of some cartwheel $W$. 
The \emph{hub}, \emph{spokes}, \emph{hats} and \emph{fan vertices} of a part graph are defined in the natural way (as for a cartwheel, these are uniquely defined by the part graph).

A \emph{part} $P$ consists of a part graph $G(P)$ and two functions $\gm_P^-, \gm_P^+: V(G(P)) \rightarrow \{5, 6, 7, \ldots \} \cup \{ \infty \}$ such that (i) $\gm_P^-(v) = \gm_P^+(v) = d(v)$ for every internal vertex $v$ of $P$ where $d(v)$ is the degree of $v$ in $P$, (ii) every spoke $v$ with $\gm_P^-(v) = \gm_P^+(v)$ is an internal vertex of $G(P)$, and (iii) $\infty > \gm_P^-(v) \leq \gm_P^+(v)$ for every $v \in V(G(P))$. A cartwheel $W$ \emph{fits a part $P$} if there is a homeomorphic copy of $G(P)$ in $G(W)$ such that 
$\gm_P^-(v) \leq \gm_W(v') \leq \gm_P^+(v)$ for every $v \in V(G(P))$ where $v'$ is the image of $v$ in $G(W)$. Note the image of $G(P)$ may not be an induced subgraph of $G(W)$, as two hats may be adjacent in $G(W)$ but not in $G(P)$.

The \emph{trivial part of degree $d$} is the part $P$ with no fan vertices whose hub has degree $d$ and such that $\gm_P^-(v) = 5$, $\gm_P^+(v)= \infty$ for every spoke and hat $v$. We say that a part $P$ is \emph{dispatched} if every cartwheel $W$ that fits $P$ such that $c_{\mcal{L}}(W) > 0$ contains a configuration from $\mcal{U}$. Our goal is thus to dispatch the trivial part of degree $d$ for $d = 5, 6, \dots, 11$ since every cartwheel of hub degree $d$ fits the trivial part of degree $d$. The strategy is, given a part $P$ that we do not know how to dispatch, to refine the part into two complementary parts $P'$ and $P''$ such that every cartwheel $W$ that fits $P$ either fits $P'$ or $P''$, and to hope that $P'$ and $P''$ contain enough information to be dispatched by some direct observation---otherwise one keeps on refining the parts. 

\newcommand{\tone}{\tau_{\tt R}}
\newcommand{\ttwo}{\tau_{\tt H}}
\newcommand{\tthree}{\tau_{\tt S}}

Robertson et al.~use three tests $\tone$, $\ttwo$ and $\tthree$ to directly dispatch a part $P$. If the test $\tone(P)$ returns true then every cartwheel $W$ that fits $P$ contains an element of $\mcal{U}$, so $P$ is dispatched. If the test $\ttwo(P)$ returns true then every cartwheel $W$ that fits $P$ has $c_{\mcal{L}}(W) \leq 0$ unless $W$ contains an element of $\mcal{U}$, so $P$ is also dispatched. Finally the test $\tthree(P)$ returns true if a part $P$ is dispatched by reason of symmetry with previously dispatched parts. 
To dispatch the trivial part of degree $d$ one starts by placing the trivial part onto an empty stack. 
Then the following steps are repeated until the stack is empty: pop the last part $P$ on the stack if either $\tone(P)$ or $\ttwo(P)$ or $\tthree(P)$, otherwise choose parts $P'$, $P''$ 
such that every cartwheel $W$ that fits $P$ fits either $P'$ or $P''$, replace $P$ with $P'$, add $P''$ onto the top of the stack, and return to the first step. Clearly if the process terminates then the trivial part of degree $d$ is dispatched. The presentation file instructs the program whether to test for $\tone$, $\ttwo$ or $\tthree$ or else which parts $P'$, $P''$ to choose. Thus every line in a presentation file is one of four types: an order to test for either $\tone$, $\ttwo$ or $\tthree$, or a ``branching order'' specifying two complementary parts $P'$, $P''$ that refine $P$.

We give a brief overview of the tests $\tone$ and $\ttwo$ with special attention to implementation details that place restrictions on $\mcal{U}$ and $\mcal{L}$. Say that a part $P$ is \emph{reducible} if a configuration from $\mcal{U}$ appears in every cartwheel fitting $P$. Thus if $\tone(P)$ returns true $P$ should be reducible. Say that a configuration $K$ \emph{appears} in a part $P$ if there is an induced subgraph $H$ of $G(P)$ homeomorphic to $G(K)$ such that $\gm_P^-(v') = \gm_K(v) = \gm_P^+(v')$ for all vertices $v \in V(K)$, where $v'$ is the image of $v$ in $H$. Note that $K$ may appear in $P$ without appearing in every cartwheel $W$ fitting $P$. Indeed, while $H$ is an induced subgraph of $G(P)$, $G(P)$ is not necessarily an induced subgraph of $G(W)$, so the image of $H$ is not necessarily induced in $G(W)$ (recall that two hats may be adjacent in $G(W)$ but not in $G(P)$). 

\wellpositionedfig

To remedy this Robertson et al.~say a subgraph $H$ of $G(P)$ is \emph{well-positioned} if 
$H$ contains a spoke $v$ whenever it contains the two hats adjacent to $v$ in $P$. It is easy to see that if a configuration $K$ appears well-positioned in $P$ it appears in every cartwheel fitting $P$. This relationship is not if-and-only-if: a configuration could appear in every cartwheel fitting a part, but not appear well-positioned in the part itself, or indeed in any part (Fig$.$ \ref{wellpositioned}). The test $\tone$ only recognizes well-positioned configurations. While this may seem like a drawback configurations such as the one of Fig$.$~\ref{wellpositioned} are quite few. An advantage of this definition of appearance, used in the program, is that modifying the values of $\gamma_P^-(v)$ or $\gamma_P^+(v)$ for a vertex $v$ not in the image of the configuration does not affect whether the configuration appears well-positioned or not. 

\spanningtreefig
\bigradiusesfig

To check whether a given configuration $K \in \mcal{U}$ appears in a part $P$ a ``spanning subtree of faces'' is first prepared for $K$ by removing edges from $G(K)$ until no internal vertices are left (Fig$.$ \ref{spanningtree}); then the program tries to ``unfold'' the spanning subtree onto $G(P)$ triangle by triangle, choosing an initial edge and an orientation; 
the unfolding fails if a vertex is missing or if vertex degrees are incompatible. If the unfolding completes successfully it is not actually clear that $K$ appears in $P$---the mapping from $G(K)$ to $G(P)$ constructed by the unfolding may not be injective, or its image may not be induced, and so forth. 
Robertson et al.~have a lemma showing that $K$ appears in $P$ if the unfolding completes successfully as long as: 
(i) $K$ has radius $\leq 2$, 
where the ``radius'' of a configuration is the smallest $r$ such that there is a vertex in $G(K)$ at distance $\leq r$ from every other vertex in $G(K)$, 
(ii) $P$ has hub of degree $\geq 6$. The program
checks (i) at the start of the program when the set $\mcal{U}$ is loaded. If $\mcal{U}$ contains a configuration of radius $> 2$ the program terminates with an error message.
The latter behavior forced us to exclude 
certain $D$-reducible configurations that could have been interesting to use (Fig$.$ \ref{bigradiuses}). These are configurations which, despite having radius greater than 2, can fit inside a 
cartwheel by having their image exclude the hub. 
There are relatively few such configurations, though, and their loss does not seem to significantly impact the size of the unavoidable set. 

The discharging program also contains a safety check function which verifies ``rather crudely, from first principles'' that the mapping produced by the unfolding process is one-to-one, induced and well-positioned. 
Thus the correctness of the computation does not depend on the lemma; the lemma simply shows that, when conditions (i), (ii) are met and well-positionedness is checked, the mappings produced by the unfolding routine are 
\emph{theoretically} correct, and should never result in a complaint from the safety check function. The presence of the safety check function means that it is valid to use the program even if the conditions of the lemma aren't met. For example, one could remove the radius check by commenting out a line and use a set $\mcal{U}$ containing configurations of radius $> 2$; a successful computation would still indicate a correct discharging procedure. (We didn't do so because configurations of radius $> 2$ are not all that valuable, and we preferred not to modify the program in any qualitative way.) It is also correct to use the program with cartwheels of hub degree 5, eschewing condition (ii) of the lemma. 
(Despite the lemma, the program only checks that the hub degree is between 5 and 11.)

The test $\ttwo(P)$ should return true only if every cartwheel $W$ fitting $P$ contains a reducible configuration or else has $c_{\mcal{L}}(W) \leq 0$. 
The test depends on being able to upper bound the maximum net amount of charge that can be transferred from two spokes $u, v$ of a part $P$ to its hub in any cartwheel $W$ fitting $P$ that does not contain a configuration $K \in \m{U}$. We note this maximum net amount of charge as $\zeta(P, u, v)$. 
For every call to $\ttwo$ a sequence of triplets $S = ((u_1, v_1, q_1), \dots, (u_k, v_k, q_k))$ is specified in the presentation file, where $u_i, v_i$ are spokes of $P$ and $q_i \in \qq$ such that every spoke appears in exactly two triplets. 
The test $\ttwo$ 
returns true if $\zeta(P,u_i,v_i) \leq q_i$ for $i = 1, \dots, k$ and if $6-d + \lfloor (q_1 + \dots + q_k)/2 \rfloor_{10}\leq 0$, where $d$ is the hub degree and $\lfloor \cdot \rfloor_{10}$ denotes rounding down to the nearest fraction\footnote{Naturally, the program multiplies all charge values by 10 to make quantities integral; then one rounds down to the nearest integer.} of $\frac{1}{10}$ (i.e$.$ $\lfloor x \rfloor_{10} = \lfloor 10x \rfloor / 10$). The sequence $S$ is called a \emph{hubcap} for $P$. Possibly (and in fact quite often) a hubcap contains triplets where $u_i = v_i$. If $u = v$ then $\zeta(P,u,v)$ represents the maximum net charge transfer frum $u$ to the hub in any cartwheel fitting $P$, and $u$ still appears in exactly two triplets.

\unencodablefig

One upper bounds $\zeta(P,u,v)$ by enumerating 
all the ways incoming rules with source $u$ or $v$ can be placed onto the part. A combination is discarded if it triggers $\tone$; if not, the net charge transfer is computed by subtracting the charge of rules induced outward with sink $u$ or $v$ from the charge of rules induced inward with source $u$ or $v$. 
Doing this mainly requires a way of ``gluing'' rules onto a part to produce a new part, in order to check whether $\tone$ is triggered. The elegant method used by Robertson et al.~is to encode rules themselves as parts\footnote{In the program, rules are actually encoded as a more compact data structure called an \emph{outlet}. Outlets are efficient for representing parts that have few vertices $v$ with $(\gm_P^-(v), \gm^+(v)) \ne (5, \infty)$.} and to have a procedure for superposing or ``anding'' two parts together, which is easy to do. 
 Each rule becomes two parts, one in which the sink becomes 
the hub and one in which the source 
becomes the hub (if the degrees of the sink or source are not compatible with the hub degree under consideration, that part is not created). 
However not all rules can be encoded as parts, given that fan vertices do not exist in a part until the degree of the spoke beneath them has been fixed. For example the rule of Fig$.$~\ref{unencodable} cannot be encoded as a part where its sink coincides with the hub of the part, because the ``spoke'' that would be attached to the vertex of degree 5 has undetermined degree. Thus such a rule cannot be used with the program of Robertson et al. Once again, however, we did not find this to be particularly bothersome. 

We do not discuss the test $\tthree$, which dispatches a part by reason of symmetry with a previously dispatched part, as the reader should find such a test fairly intuitive. Fig$.$ \ref{presentation} illustrates a sample presentation file. To understand the figure some syntax is needed. Each line of the file starts with an `{\tt L}' followed by the recursion depth, which is the number of parts on the stack at that point. Lines that specify a branching order, called \emph{condition lines}, start with a `{\tt C}'. Lines that indicate a call to $\tone, \ttwo$ or $\tthree$ are called \emph{disposition lines} and start with {\tt R}, {\tt H} or {\tt S} respectively ({\tt H} is for ``hubcap'').
A condition line specifies a vertex $v$ of the last part $P$ on the stack such that $\gm_P^-(v) < \gm_P^+(v)$ and tells the program to branch either by raising $\gm_P^-(v)$ to a value $b \leq \gm_P^+(v)$ or by lowering $\gm_P^+(v)$ to a value $b \geq \gm_P^-(v)$. Each order has a \emph{complementary order}: in the first case, to lower $\gm_P^+(v)$ to $b - 1$, in the second case, to raise $\gm_P^-(v)$ to $b + 1$. The part $P'$ created by the original order is placed on top of the stack, whereas the part $P''$ created by the complementary order replaces $P$. (If $v$ is a spoke, an order which sets $\gm_P^+(v) = \gm_P^-(v)$ entails the creation of fan vertices for $v$, which are set to have lower degree 5 and upper degree $\infty$.) 

The following numbering scheme is used for vertices: numbers $1, \dots, d$ are the spokes in clockwise order, 
numbers $d + 1, \dots, 2d$ are the hats in clockwise order such that hat 1 is adjacent to spoke 1 and spoke 2, and finally number $k + (l+1)d$ where $1 \leq k \leq d$ and $l \geq 1$ is the $l$-th fan vertex clockwise above the $k$-th spoke, if the fan vertex exists. 
Altogether the format of a condition line is ``{\tt C} $m$ $n$'' where $m$ is the number of the vertex $v$ to be affected and $n \in \{\dots, -6, -5, 6, 7, \dots\}$ specifies the modification to the degree function. If $n > 0$ the order is to increase $\gm_P^-(v)$ to $n$, otherwise the order is to decrease $\gm_P^+(v)$ to $-n$.  

\presentationfig

The triplets of numbers in an `{\tt H}' line list the hubcap. Identical triplets are only listed once and charge values are multiplied by 10 in order to avoid fractions. 
When reading a presentation file the hubcap lines are by far the most difficult to check, so difficult that it is debatable to what degree they are really ``readable''. 
One can muster some help with the `verbose' output option of the program,
which gives information as to which reducible configurations are appearing along with other helpful details. While the resulting output may be readable at a normal pace it is also quite large: over 3'000'000 lines for the Robertson et al.~proof, over 13'000'000 lines for our proof. A mathematician checking these proofs at the rate of one line per second and working 9 hours a day would take over 3 months to read the Robertson et al.~proof and over a year to read ours. 
We refer readers with further interest in presentation files to \cite{discharge}.

The proof of Lemma \ref{appears} requires a separate argument to show that vertices of degree $\geq 12$ are not overcharged. For this it suffices to show that $r_{\m{L}_{42}}(u,v) \leq \frac{1}{2}$ for any two adjacent vertices $u$, $v$ of an internally 6-connected triangulation $T$ where $v$ has degree $\geq 12$ as long as $T$ does not contain an element of $\m{U}_{\ncmath}$, since the initial negative charge of vertices of degree $\geq 12$ is at least half their degree in absolute value. This is proved in Lemma \ref{lastly} below. The configurations that are used for the proof appear in Fig$.$ \ref{proofconfs} (these configurations all appear on the first page of  ${\tt U\_\ncmath.pdf}$---which is ordered identically to  ${\tt U\_\ncmath.conf}$---making it easy to check they are in $\mcal{U}_{\ncmath}$).
The proof refers to rules by the order which they appear in Fig$.$ \ref{mainrules}, from left to right and top to bottom. Serious readers are encouraged to pencil in some numbers.

\newcommand{\FtFfSoFfFtFfSoFf}{3}
\newcommand{\FtFfSoFsFtSfSoFf}{4}
\newcommand{\FtFfSoSsFtSsSoFf}{5}
\newcommand{\FtFsSoFsFtSfSoSf}{6}
\newcommand{\FtFsSoSfFtFsSoSf}{7}
\newcommand{\FtFsSoSsFtSsSoSf}{8}

\proofconfsfig

\begin{lemma}\label{lastly}
Let $u, v$ be adjacent vertices of an internally 6-connected triangulation $T$, where $v$ has degree at least 8. Then $r_{\m{L}_{42}}(u, v) \leq \frac{1}{2}$ if no members of $\m{U}_{\ncmath}$ appear in $T$.
\end{lemma}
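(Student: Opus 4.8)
The plan is to reduce $r_{\m{L}_{42}}(u,v)$ to a finite local counting problem and then dispatch it by a case analysis on the degree of the source $u$. By definition $r_{\m{L}_{42}}(u,v) = \sum_{L \in \m{L}_{42}} q(L)\, N_L$, where $N_L$ is the number of distinct images of $L$ appearing with source $u$ and sink $v$ in $T$. Since every vertex of every rule lies at distance at most two from both the source and the sink, and $u,v$ are adjacent, every image contributing to this sum is contained in the union of the second neighborhoods of $u$ and $v$. By internal 6-connectivity these neighborhoods are well-behaved (Lemma \ref{nn}), so the set of rules that fit with source $u$ and sink $v$ is completely determined by a bounded local picture: the degree of $u$, the two common neighbors of the edge $uv$ (a triangulation supplies exactly two), and the degrees of the spoke and fan vertices around $u$. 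Thus it suffices to bound, over all admissible local pictures, the total charge of the rules that simultaneously fit.

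First I would organize the argument by the degree of $u$. For $d(u) \in \{5,6\}$ the source is a minor vertex whose outgoing charge is governed by the Mayer rules (rules $1$--$7$ of Fig.\ \ref{mainrules}, cf.\ Fig.\ \ref{mayerrules}): a degree-$5$ vertex emits its charge in packets of $\tfrac{1}{10}$ and a degree-$6$ vertex is never a net source, and one checks directly that the charge reaching a fixed neighbor $v$ is at most $\tfrac{1}{2}$. For $d(u) \geq 8$ both $u$ and $v$ are major, and one enumerates the rules of $\m{L}_{42}$ that can be oriented with a source of degree $\geq 8$ and a sink of degree $\geq 8$, summing their $q$-values to check the total stays below $\tfrac{1}{2}$. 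These cases I expect to be comparatively routine.

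The substantive case is $d(u) = 7$, which is exactly the analysis drawn in Fig.\ \ref{degreetwelve}. Here I would list every rule of $\m{L}_{42}$ that can be oriented with a degree-$7$ source and a sink of degree $\geq 8$, and then walk through the possible degree assignments to the wheel of $u$ (its spokes and the fan vertices around it). For each such local structure I would record which of the listed rules actually fit with source $u$ and sink $v$, add up their charges (each $\tfrac{1}{10}$ or $\tfrac{1}{5}$), and verify the sum does not exceed $\tfrac{1}{2}$. The point that makes the enumeration terminate and that closes the worst branches is that any local structure forcing the total above $\tfrac{1}{2}$ also forces one of the $D$-reducible configurations of Fig.\ \ref{proofconfs} to appear around $u$; since these configurations belong to $\m{U}_{\ncmath}$ and, by hypothesis, no member of $\m{U}_{\ncmath}$ appears in $T$, those branches are vacuous and may be pruned.

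The main obstacle is precisely this degree-$7$ enumeration: assembling the complete list of applicable rules with source $u$ and sink $v$, and checking that no admissible arrangement of the wheel of $u$ lets their charges sum above $\tfrac{1}{2}$ without producing an excluded configuration. This is a delicate but finite bookkeeping over the neighborhood of the degree-$7$ hub, and it is the content encoded in Fig.\ \ref{degreetwelve}; the remaining source degrees are much easier to handle.
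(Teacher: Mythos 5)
Your proposal follows essentially the same route as the paper's proof: a case analysis on $d(u)$ (with $d(u)\geq 9$ giving $r_{\m{L}_{42}}(u,v)=0$ since no rule has a source of that degree), an enumeration of how many rule appearances with source $u$ and sink $v$ can coexist, and the use of the $D$-reducible configurations of Fig$.$~\ref{proofconfs}---all members of $\m{U}_{\ncmath}$---to prune any combination that would push the total above $\frac{1}{2}$, with the degree-7 case organized exactly as in Fig$.$~\ref{degreetwelve}. One caution: the lemma bounds the gross one-directional transfer $r_{\m{L}_{42}}(u,v)$, not a net quantity, so your remark that a degree-6 vertex ``is never a net source'' is misleading---rules with degree-6 sources do exist in $\m{L}_{42}$ (rules 2 through 7), and the paper's degree-6 case requires the same kind of configuration-based counting as the degree-7 case, though it is shorter.
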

\begin{proof}
We say that a rule \emph{appears} in $T$ to mean it appears with source $u$ and sink $v$. If $u$ has degree $\geq 9$ then $r_{\m{L}_{42}}(u,v) = 0$ because there are no rules with source of degree $\geq 9$ in $\m{L}_{42}$. Thus we can assume that $u$ has either degree 5, 6, 7 or 8, and we analyze each of these cases separately. Except for the first case rules all have value $\frac{1}{10}$ and the object is to show that no more than 5 rules (multiplicities counted) can appear simultaneously, without having a configuration in $\m{U}_{\ncmath}$ appear. 

We proceed with the four cases.\\

\noindent \emph{Vertex $u$ has degree 5.} In this case only the first four rules of $\m{L}_{42}$ can appear. Each can appear at most twice except for rule 1 which can (and will) appear only once. Say rule 4 appears twice. Then rules 2 and 3 cannot appear at all because of configurations 1 and 2 (Fig$.$ ), so $r_{\m{L}_{42}}(u, v) = \frac{2}{5} < \frac{1}{2}$. 
Otherwise say rule 3 appears twice. Then rule 2 cannot appear because of configuration 1 and rule 4 cannot appear because of configurations 1 and 2 again, so $r_{\m{L}_{42}}(u, v) = \frac{2}{5}$ again. Likewise if rule 2 appears twice then rule 4 cannot appear and rule 3 can appear only once, so $r_{\m{L}_{42}}(u, v) \leq \frac{2}{5} + \frac{1}{10} = \frac{1}{2}$. Therefore we can assume rules 2, 3 and 4 appear at most once each, for a possible total of $\frac{1}{5} + \frac{3}{10} = \frac{1}{2}$.\\

\degreetwelvefig

\noindent \emph{Vertex $u$ has degree 6.} In this case the only rules which can appear are rules 2 through 7. Say rule 7 appears twice. Then rules 2, 3 and 6 cannot appear, and rules 4, 5 can only appear twice between the two of them because of configuration 2. Thus the charge transferred is at most $\frac{1}{10} + \frac{1}{10} + \frac{2}{10} < \frac{1}{2}$.
Now say rules 2 and 7 both appear. If rules 3, 4, 5 total 4 appearances between them then rule 4 must appear twice and configuration 2 appears, so rules 3, 4, 5 can only total 3 appearances between them, which makes a possible total of 5 rule appearances. If rule 7 appears and rule 2 does not appear then rule 6 does not appear and rules 4, 5 cannot both appear twice because of configuration 2, so the total number of rule appearances is at most 5, since rule 3 can only appear once. Therefore we can assume rule 7 does not appear.

If rule 2 appears twice then rule 4 cannot appear twice because of configurations \FtFfSoFfFtFfSoFf, \FtFfSoFsFtSfSoFf, \FtFsSoFsFtSfSoSf{} and rules 3, 6 can only appear twice between the two of them so there are most 5 rule appearances. If rule 2 appears once then rules 3, 5, 6 can each appear only once and if they each appear once then rule 4 cannot appear twice because of configuration \FtFsSoSfFtFsSoSf, so there are at most 5 configuration appearances. On the other hand if rule 2 does not appear then rule 6 does not appear and rules 3, 4, 5 cannot each appear twice because of configuration 2, so total at most 5 rules appearances between them.\\

\noindent \emph{Vertex $u$ has degree 7.} This is the lengthiest case, as there are the most rules with source of degree 7 and sink of unbounded degree. To ease the analysis we have shown the cases in Fig$.$ \ref{degreetwelve}. 

If $u$ has degree 7 then one of the rules in the left column of Fig$.$ \ref{degreetwelve} appears with source $u$ and sink $v$ in $T$, as is easy to check (these rules are not part of $\m{L}_{42}$, they are just a device for organizing the case analysis).
Next to each of these rules we list the numbers of the rules in $\m{L}_{42}$ that can appear in $T$ when the rule appears; a number is listed twice if a rule can appear twice, in which case one should think of the first occurence of the number as representing the ``right side up'' appearance of the rule as drawn in Fig$.$ \ref{mainrules}, and the second occurence of the number as representing ``upside down'' occurence of the rule obtained by a horizontal mirror symmetry. If a rule cannot appear without creating the appearance of a configuration from Fig$.$ \ref{proofconfs} its number is crossed out. A set of rules that cannot appear simultaneously or that cannot appear simultaneously without creating a configuration from Fig$.$ \ref{proofconfs} is underlined. A set of rules that are pairwise incompatible is overlined. 

In all cases it is easy to check that at most 5 rules can appear simultaneously. For example, in the next-to-last case, only three rules from the set $\{8, 8, 9, 9\}$ can appear because that group is underlined and only one rule from the overlined group can appear; this makes a maximum of $3 + 1 + 1 = 5$ rule appearances.\\ 

\noindent \emph{Vertex $u$ has degree 8.} There are only 4 rules in $\m{L}_{42}$ with hub degree 8 and unbounded sink degree, rules 39 through 42. Since only rule 43 is compatible with its mirror image (rule 41 has no mirror image) only 5 rules can appear among these.

\end{proof}

\noindent \textbf{Acknowledgments.} I would like to thank Phillip Rogaway, my Ph.D. advisor, for allowing me to work on this project outside the scope of his research. My greatest thanks to Tatsuaki Okamoto and the team at NTT labs for hosting me and my work for two months. Thanks also to Walter Stromquist and Wolfgang Haken who helped me with references, to Robin Thomas for data on $D$-reducibility, and to Zacharia Johnson for helping me set up a distributed computation at UC Davis. Finally I would like to give my full thanks to Izabella Laba, my sponsor at the University of British Columbia.

\end{document}